\newtheorem{thm}{Theorem}               \newtheorem{lem}[thm]{Lemma}
\newtheorem{cor}[thm]{Corollary}
\newtheorem{prop}[thm]{Proposition}
\newtheorem*{defi*}{Definition} \newtheorem*{ex*}{Example}
\newtheorem*{thm*}{Theorem}         \newtheorem*{cor*}{Corollary}
\newtheorem*{rmk*}{Remark}          
     \newtheorem*{prop*}{Proposition}
\newtheorem*{conj*}{Conjecture}
\newtheorem{remark}[thm]{Remark}
\newtheorem{definition}[thm]{Definition}
\numberwithin{thm}{section}
\newtheorem*{namedtheorem}{\theoremname}
\newcommand{\theoremname}{testing}
\theoremstyle{remark}
\newcommand{\BR}{\mathbb R}         
         \newcommand{\BZ}{\mathbb Z}
\newcommand{\lra}{\longrightarrow}
\DeclareMathOperator{\Aut}{Aut} \DeclareMathOperator{\IA}{IA}
\DeclareMathOperator{\PS}{P\Sigma}      
\DeclareMathOperator{\GL}{GL}       
\begin{document}

\title[On the Andreadakis-Johnson filtration of $\Aut F_n$]{On the Andreadakis-Johnson filtration of \\ the automorphism  group of \\ a free group}

\author[F.~R.~Cohen]{F.~R.~Cohen}
\address{Dept.~of Mathematics \\
  University of Rochester \\
  Rochester, NY}
\email{cohf@math.rochester.edu}

\author[A.~Heap]{Aaron Heap}
\address{Dept.~of Mathematics \\
  SUNY Geneseo \\
  Geneseo, NY}
\email{heap@geneseo.edu}

\author[A.~Pettet]{Alexandra Pettet}
\address{Department of Mathematics \\
  University of Michigan \\
  Ann Arbor, MI}
\email{apettet@umich.edu}

\date{\today}

\maketitle

\begin{abstract}

Families of non-trivial cohomology classes are given for the
discrete groups belonging to the Johnson filtration of the
automorphism group of a  free group generated by $n$ letters. The
methods are (1) to analyze analogous classes for filtrations of a
subgroup of the pure symmetric automorphism group of a free group
and (2) to analyze features of these classes which are preserved by
the Johnson homomorphism with values in the Lie algebra of
derivations of a free Lie algebra. One consequence is that the ranks
of the cohomology groups in any fixed degree $i$ for $ 1 \leq i \leq
n-2$ for the Johnson filtrations of $IA_n$ increase without bound.
The actual classes constructed are fragile in the sense that they
vanish after passage to successive filtrations; furthermore, these
classes are all naturally in the image of the Johnson homomorphism
on the level of cohomology. The methods are similar to those
occurring within the theory of arrangements.

\

The authors congratulate Corrado De Concini on the occasion of his
60-th birthday.  One of the authors would like to thank Corrado for
a wonderful time arising from a discussion about cohomology of
groups with friends in the basement of Il Palazzone !
\end{abstract}

%
%

\section{Introduction and preliminaries}\label{sec:introduction}

Let $F_n = F[ x_1,\ldots,x_n]$ be the free group on generators $x_1,
\ldots, x_n$. There is a surjective natural map $Aut(F_n)
\rightarrow \GL_n (\BZ)$ which sends an automorphism to the induced
map on $H_1(F_n)$. The kernel $IA_n$ consists of exactly those
automorphisms which induce the identity on $H_1(F_n)$. This is the
$Aut(F_n)$-analogue of the {\it Torelli group}, the subgroup of the
mapping class group of a surface which acts trivially on the first
homology group of the surface.

The group $IA_n$ is the first in a series of subgroups belonging to
the {\it Andreadakis-Johnson filtration of $Aut(F_n)$}; that is,
the descending series $IA_n = J_n^1 \supset J_n^2 \supset \cdots$,
where $J_n^s$ consists of those automorphisms of $Aut(F_n)$ which
induce the identity map on the level of $F_n/\Gamma_n^{s+1}$; here
$\Gamma_n^{s+1}$ denotes the $(s+1)$-st stage of the descending central
series for $F_n$, as given in more detail below. The subgroup $J_n^s$
is sometimes called the $s$-th Andreadakis-Johnson filtration, or
$s$-th Johnson filtration below, which gives a decreasing filtration
of $IA_n$. The main goal in this paper is to establish quantitative
information about the cohomology of these groups. One implication of
the work here is that the ranks of the cohomology groups in any
fixed degree $i$ for $1 \leq i \leq n-2$ increase without bound by
going deeper into the Andreadakis-Johnson filtration.

A brief summary is given next for what was known previously about
the (co)homology of the Johnson filtrations.

In 1934, Magnus \cite{Magnus} provided a finite generating set for
$IA_n$ (see Section \ref{sec:Properties of McCool's group} below). The first homology and cohomology groups were partially
computed by Andreadakis \cite{Andreadakis}; completely by Kawazumi
in \cite{Kawazumi}. A number of degree two cohomology classes in the
image of cup product were computed by Pettet \cite{Pettet}. However
it is still unknown whether or not $IA_n$ is finitely presentable
for $n \geq 4$. The cohomological dimension of $IA_n$ was recently
computed to be $2n-3$ by Bestvina-Bux-Margalit
\cite{Bestvina-Bux-Margalit_IA}, where the maximal non-zero homology
group was shown not to be finitely generated; this implies in
particular that $IA_3$ is {\it not} finitely presentable.

Even less is known about the groups $J_n^s$ for $s \geq 2$. By
adapting a theorem of McCullough-Miller \cite{McCullough-Miller}, it
follows for $n=3$ and $s \geq 2$ that $J_3^s$ is not finitely
generated, but it is unknown whether or not its abelianization is
finitely generated, and little is known along these lines for $n
\geq 4$. Satoh \cite{Satoh06,Satoh09} has studied some abelian
quotients of terms of the Johnson filtration via the Johnson
homomorphisms (see Section \ref{sec:A review of some central series}
for definitions).

Before stating precise results of this paper, first record some
facts and define some notation. Recall the structure of the free Lie
algebra defined over the integers $\mathbb Z$ generated by a free
abelian group $V$, denoted here by $L[V]$. The free abelian group $V$
is frequently of finite rank.
In the case that the rank is $q$, the notation $V_q$ is used in place of $V$ below.

The Lie algebra $L[V]$ can be described as the smallest sub-Lie
algebra of the tensor algebra $T[V]$ which contains $V$. Thus $L[V]$
is graded with the $s$-th graded summand given by
$$L_s[V]= L[V] \cap V^{\otimes s}.$$ One classical result is that
$L_s[V]$ is a finitely generated free abelian group with ranks given
in \cite{Hall}, \cite{Serre}, and \cite{Milnor-Moore}, with
\cite{Witt} for the graded case.

It is convenient to use the standard dual of $L_s[V]$ given by
$$L_s[V]^* = Hom(L_s[V], \mathbb Z)$$ together with
$$\mathbb Z \oplus L_s[V]^*$$ where $\mathbb Z$ is concentrated
in degree 0, and the finitely generated free abelian group
$L_s[V]^*$ is required to be concentrated in degree $1$ in the main
result stated next.

The $s$-th Johnson homomorphism, as described in Section \ref{sec:A
review of some central series} below,  has domain $J_n^s$ and takes
values in the finitely generated free abelian group $Hom( V_n,
L_{s+1}[V_n])$; thus is a direct product of finitely many copies of the
integers. The integer cohomology of the discrete group $Hom(
V_n, L_{s+1}[V_n])$ is therefore a finitely generated exterior algebra. The
purpose of this article is to show that the image of the Johnson
homomorphism $$\tau_s: J_n^s \to Hom( V_n, L_{s+1}[V_n])$$ on the
level of integer cohomology groups has the direct summand in the image
described next.

\begin{thm}\label{thm:cohomology.in.johnson.filtrations}
If $n \geq 3$, and $2 \leq q \leq  n-1 $, the integral cohomology
ring $H^*(J_n^s)$ contains a direct summand which is additively
isomorphic to $$ \bigotimes_{n-q} (\mathbb Z \oplus L_s[V_{q}]^*)$$
in the image of $$\tau_s^*:H^*(Hom( V_n, L_{s+1}[V_n])) \to
H^*(J_n^s).$$  Thus if $1 \leq i \leq n-2,$ then the ranks of
$H^i(J_n^s)$ increase without bound as $s$ increases.
\end{thm}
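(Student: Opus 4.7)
The plan is to embed a product of $n-q$ copies of $\Gamma^s(F_q)$ (the $s$-th term of the lower central series of the subgroup $F_q=F[x_1,\dots,x_q]\subset F_n$) into $J_n^s$ via partial conjugations, compute the Johnson image explicitly, and transfer the analysis to cohomology using the crucial fact that $\Gamma^s(F_q)$ is itself a free group. For each $i\in\{q+1,\dots,n\}$ and each $w\in\Gamma^s(F_q)$, I consider the automorphism $\chi_{i,w}\in\Aut(F_n)$ sending $x_i\mapsto wx_iw^{-1}$ and fixing every other generator. Since $\chi_{i,w}(x_i)x_i^{-1}=[w,x_i]\in\Gamma_n^{s+1}$, we have $\chi_{i,w}\in J_n^s$, and for distinct indices $i\ne i'$ the automorphisms $\chi_{i,w}$ and $\chi_{i',w'}$ commute because both $w$ and $w'$ lie in $F_q$ and are fixed by both. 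These observations assemble into an injective homomorphism
$$\iota\colon(\Gamma^s(F_q))^{n-q}\hookrightarrow J_n^s.$$

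The second step is to compute the composite $\tau_s\circ\iota$. Using Magnus's isomorphism $\Gamma^s(F_q)/\Gamma^{s+1}(F_q)\cong L_s[V_q]$ together with the standard correspondence between group commutators and Lie brackets, one checks that $\tau_s(\chi_{i,w})\in\Hom(V_n,L_{s+1}[V_n])$ is the map $x_i\mapsto[\bar w,x_i]$ and $x_j\mapsto 0$ for $j\ne i$, where $\bar w\in L_s[V_q]$ is the class of $w$. Hence $\tau_s\circ\iota$ factors as
$$(\Gamma^s(F_q))^{n-q}\twoheadrightarrow L_s[V_q]^{n-q}\xrightarrow{\;g\;}\Hom(V_n,L_{s+1}[V_n]),$$
where $g((m_i))=\sum_{i>q}x_i^*\otimes[m_i,x_i]$. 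A short argument inside the tensor algebra on $V_n$ shows that $[-,x_i]\colon L_s[V_q]\to L_{s+1}[V_n]$ is injective for $i>q$ and that the images for distinct $i$ lie in disjoint multi-degrees, so $g$ is an inclusion of a direct summand. For each $i>q$ and each basis element $e$ of $L_s[V_q]$, let $\alpha_{i,e}\in H^1(\Hom(V_n,L_{s+1}[V_n]))$ be the functional dual to $x_i^*\otimes[e,x_i]$, and set $\beta_{i,e}:=\tau_s^*(\alpha_{i,e})\in H^1(J_n^s)$.

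Finally, let $A\subset H^*(J_n^s)$ denote the additive subgroup spanned by the cup products $\beta_{i_1,e_1}\cup\cdots\cup\beta_{i_k,e_k}$ over all $k\ge 0$, all $\{i_1<\cdots<i_k\}\subset\{q+1,\dots,n\}$, and all choices of basis vectors $e_{i_j}$ of $L_s[V_q]$. Since $\Gamma^s(F_q)$ is a subgroup of a free group it is itself free, so $H^*(\Gamma^s(F_q))$ is concentrated in degrees $0$ and $1$, and K\"unneth gives
$$H^*\bigl((\Gamma^s(F_q))^{n-q}\bigr)=\bigotimes_{i=q+1}^n\bigl(\mathbb Z\oplus H^1(\Gamma^s(F_q))\bigr).$$
Because $L_s[V_q]$ is free abelian the surjection $\Gamma^s(F_q)^{ab}\twoheadrightarrow L_s[V_q]$ splits, so $L_s[V_q]^*$ is a direct summand of $H^1(\Gamma^s(F_q))$, and therefore $\bigotimes_{n-q}(\mathbb Z\oplus L_s[V_q]^*)$ is a direct summand of $H^*((\Gamma^s(F_q))^{n-q})$. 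The Johnson calculation identifies $\iota^*|_A$ as an isomorphism of $A$ onto this summand; composing a retraction onto the summand with $\iota^*$ and the inverse of $\iota^*|_A$ then gives a retraction $H^*(J_n^s)\twoheadrightarrow A$, establishing $A$ as a direct summand of $H^*(J_n^s)$ lying in the image of $\tau_s^*$. Witt's formula forces $\rank L_s[V_q]\to\infty$, and for any $q$ with $2\le q\le n-i$, $A$ contributes rank $\binom{n-q}{i}(\rank L_s[V_q])^i$ to $H^i(J_n^s)$, yielding the unbounded growth.

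The step I anticipate being most delicate is the explicit Johnson computation together with the verification that $\iota^*$ sends $A$ isomorphically onto the claimed tensor-product summand; both rest on the linear-independence properties of $[-,x_i]$ in the free Lie algebra $L[V_n]$ and on careful bookkeeping with the splitting of $\Gamma^s(F_q)^{ab}\twoheadrightarrow L_s[V_q]$ to avoid contamination from $H^1$-classes dual to the complementary summand.
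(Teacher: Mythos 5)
Your construction is, up to notation, exactly the paper's: the partial conjugations $\chi_{i,w}$ generate the paper's free subgroups $G(n,i,q)$ of the upper-triangular McCool group $\PS_n^+$, your $\iota$ is the map $\Theta(n,q+1)$ restricted to $\Gamma^s H(n,q+1)\cong(\Gamma^s F_q)^{n-q}$, and your detection mechanism (Johnson values $x_i\mapsto[\bar w,x_i]$, split injectivity into $\Hom(V_n,L_{s+1}[V_n])$, freeness of $\Gamma^s F_q$ together with the split surjection $H_1(\Gamma^s F_q)\to L_s[V_q]$, and the K\"unneth decomposition) is precisely that of Sections 5--7. The only divergence is cosmetic --- you verify commutativity and the lower-central-series facts by direct computation rather than via McCool's presentation and the Falk--Randell theorem --- and the argument is correct.
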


\begin{remark}\label{remark:top.cohomology.mccool.plus}
The proof of Theorem \ref{thm:cohomology.in.johnson.filtrations} has direct implications concerning
geometric features of the classifying space of $J_n^s$. Namely, the
suspension of the classifying space $BJ_n^s$ has a wedge summand
given by a bouquet of spheres corresponding to the cohomology
classes in Theorem \ref{thm:cohomology.in.johnson.filtrations}. This
feature translates into a statement about the cohomology of $BJ_n^s$
for any cohomology theory.
\end{remark}

\noindent It follows from the theorem that $$ \bigotimes_{n-q}
L_s[V_{q}]^*$$ is a direct summand of the cohomology group
$H^{n-q}(J_n^s)$, a fact recorded next, in order to exhibit the rapid growth of
the cohomology as the depth of the Johnson filtration $s$ increases.

\begin{cor}\label{cor:top.cohomology.mccool.plus}
For fixed $n > 2 $, and $ n-2 \geq  k \geq 1$, the cohomology group
$H^{k}(J_n^s)$ contains $$ \bigotimes_k({L_s[V_{n-k}]^*})$$ as a
direct summand. Thus specializing to $k = n-2$, the cohomology group
$H^{n-2}(J_n^s)$ contains $\bigotimes_{n-2}(L_s[V_2]^*)$ as a direct
summand.
\end{cor}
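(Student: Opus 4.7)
The plan is to deduce Corollary \ref{cor:top.cohomology.mccool.plus} directly from Theorem \ref{thm:cohomology.in.johnson.filtrations} by specializing the parameter $q$ and extracting the top graded piece of the tensor product appearing there. Given $k$ with $1 \leq k \leq n-2$, set $q = n-k$. The constraint $2 \leq q \leq n-1$ in the theorem translates exactly to $1 \leq k \leq n-2$, so the hypotheses match up, and $n - q = k$.

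Applying Theorem \ref{thm:cohomology.in.johnson.filtrations} with this choice of $q$, I obtain a direct summand of $H^{*}(J_n^s)$ additively isomorphic to
$$\bigotimes_{k} \bigl( \mathbb{Z} \oplus L_s[V_{n-k}]^* \bigr).$$
Here $\mathbb{Z}$ sits in degree $0$ and $L_s[V_{n-k}]^*$ sits in degree $1$. Expanding the tensor product along the direct sum in each factor, the piece of total degree $k$ (the top degree) is obtained by picking the degree-$1$ summand $L_s[V_{n-k}]^*$ from each of the $k$ factors, yielding the direct summand
$$\bigotimes_{k} L_s[V_{n-k}]^*$$
of the grading-$k$ part. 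Since a direct summand of a graded abelian group restricts to a direct summand in each degree, this tensor power is a direct summand of $H^{k}(J_n^s)$, which is the first claim of the corollary.

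The second assertion is the case $k = n-2$: then $n-k = 2$ and the formula gives $\bigotimes_{n-2} L_s[V_2]^*$ as a direct summand of $H^{n-2}(J_n^s)$. Thus the corollary requires no new argument beyond Theorem \ref{thm:cohomology.in.johnson.filtrations} and a bookkeeping step to isolate the top-degree component of the exterior-type tensor product; there is no genuine obstacle, the only care needed is to verify the index matching $q = n-k$ and to note that picking the degree-$1$ factor from each tensor slot is the unique way to reach total degree $k$, so no cross terms interfere with identifying the summand.
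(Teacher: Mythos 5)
Your proposal is correct and matches the paper's own (very brief) derivation: the paper simply observes that the corollary follows from Theorem \ref{thm:cohomology.in.johnson.filtrations} by taking the top-degree piece of $\bigotimes_{n-q}(\mathbb{Z}\oplus L_s[V_q]^*)$ with $q=n-k$. Your index bookkeeping and the observation that the degree-$k$ component is exactly $\bigotimes_k L_s[V_{n-k}]^*$ are precisely the intended argument.
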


Observe that the rank of $L_s[V_q]^*$ grows rapidly for $q > 1$;
estimates of this growth are given at the end Section \ref{sec:The
last step} using Witt's formula for the ranks in free Lie algebras.
Thus the ranks of $H^{i}(J_n^s)$ for $ 1 \leq i \leq n-2$ grow
rapidly for $n$ at least $3$.

The classes constructed in Theorem
\ref{thm:cohomology.in.johnson.filtrations} are fragile in the
following sense, proven in Corollary \ref{cor:unstable.classes}
below, and stated as follows.

\begin{cor}\label{cor:fragile.classes}
If $n \geq 3$, and $2 \leq q \leq  n-1 $, the subalgebra generated
by $\bigotimes_{n-q} L_s[V_{q}]^*$ is in the kernel of the map
$$H^*(J_n^s) \to H^*(J_n^{s+1}).$$
\end{cor}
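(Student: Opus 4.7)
The plan is to deduce this corollary formally from Theorem \ref{thm:cohomology.in.johnson.filtrations} together with the defining property of the Andreadakis--Johnson filtration, namely that $J_n^{s+1}$ is contained in the kernel of the Johnson homomorphism $\tau_s : J_n^s \to \Hom(V_n, L_{s+1}[V_n])$ (as recalled in Section \ref{sec:A review of some central series}).

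Writing $\iota: J_n^{s+1} \hookrightarrow J_n^s$ for the inclusion, the composite $\tau_s \circ \iota$ is the zero homomorphism of groups and hence factors through the trivial group. Passing to integral cohomology yields the vanishing
\[
\iota^{*} \circ \tau_s^{*} \;=\; 0 \quad \text{on } H^{k}\bigl(\Hom(V_n, L_{s+1}[V_n])\bigr) \text{ for every } k \geq 1,
\]
since the cohomology of the trivial group is concentrated in degree zero.

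I would then invoke Theorem \ref{thm:cohomology.in.johnson.filtrations} to locate the direct summand $\bigotimes_{n-q} L_s[V_q]^{*}$ of $H^{n-q}(J_n^s)$ inside the image of $\tau_s^{*}$. Because $q \leq n-1$, this summand lives in the strictly positive cohomological degree $n-q \geq 1$, so by the previous step each of its generators restricts to zero under $\iota^{*}$. Since $\iota^{*}$ is a ring homomorphism and since every element of the subalgebra of $H^{*}(J_n^s)$ generated by $\bigotimes_{n-q} L_s[V_q]^{*}$ is a product of positive-degree classes and therefore itself of positive degree, the entire subalgebra is annihilated by $\iota^{*}$. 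This map $\iota^{*}$ is precisely the restriction $H^{*}(J_n^s) \to H^{*}(J_n^{s+1})$ appearing in the statement, so the corollary follows.

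There is no substantive obstacle: the result is a purely functorial consequence of Theorem \ref{thm:cohomology.in.johnson.filtrations} together with the vanishing $\tau_s\bigl|_{J_n^{s+1}} = 0$ built into the definition of the filtration. The only point deserving care is the verification that all classes in question sit in strictly positive degree, and this is immediate from the hypothesis $2 \leq q \leq n-1$.
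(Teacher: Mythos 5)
Your proof is correct and follows essentially the same route as the paper: the paper's Corollary \ref{cor:unstable.classes} derives the statement precisely from the fact that the composite $J_n^{s+1}\hookrightarrow J_n^s\xrightarrow{\tau_s}\Hom(V_n,L_{s+1}[V_n])$ is constant, so that classes in the image of $\tau_s^*$ (where Theorem \ref{thm:cohomology.in.johnson.filtrations} places the summand in question) die under restriction. Your additional remarks on positive degree and the multiplicativity of the restriction map just make explicit what the paper leaves as ``follows at once.''
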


Note that $Aut(F_n/\Gamma^{s+1})$ acts on the cohomology of $J_n^s$
as this last group is the kernel of the homomorphism $Aut(F_n) \to
Aut(F_n/\Gamma^{s+1})$. However, a more precise description of
algebra generators for  $\bigotimes_{n-q} (\mathbb Z \oplus
L_s[V_{q}]^*)$ gives that this module is not closed with respect to
this action; the closure of this module with respect to the action
is in fact much larger, a feature not addressed here.

The organization of this paper is outlined as follows. Section
\ref{sec:A review of some central series} is a review of the
filtrations of $Aut(F_n)$ given by descending central series and the
Johnson filtration. Subgroups of $Aut(F_n)$ known as McCool's group
are the subject of Section \ref{sec:Properties of McCool's group},
and are used to detect cohomology classes. The descending central
series for McCool's group is the subject of Section \ref{sec:DCS of
McCool}. Section \ref{sec:subgroup} gives natural subgroups of
McCool's group which provide a considerable clarification of the
work here. The values of the Johnson homomorphism on certain
subgroups of McCool's group are recorded in Section \ref{sec:The
values of the Johnson homomorphism on certain subgroups}. Equipped
with a theorem of Falk-Randell \cite[Theorem 3.1]{Falk-Randell}
(appearing here as Theorem \ref{thm:Falk-Randell}), these structures
are used to give a family of cohomology classes for certain
subgroups of McCool's group. These classes are in the image of the Johnson
homomorphism, seen directly by using subgroups of McCool's group
as given in Section \ref{sec:The last step}. That step finishes the
proof of the main theorem.

The authors (weakly) conjecture the following:
\begin{conj*} If $3 \leq n$, $2 \leq s$ and $1 \leq i \leq n-2$, the
cohomology group $H^i(J_n^s)$ is not finitely generated.

\end{conj*}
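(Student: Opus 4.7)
The plan is to bootstrap Theorem~\ref{thm:cohomology.in.johnson.filtrations} by combining its finite-rank direct summands with an orbit argument under the natural action of $\Aut(F_n/\Gamma^{s+1})$ on $H^*(J_n^s)$, in order to produce an infinite linearly independent family of cohomology classes in each degree $1 \leq i \leq n-2$.

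The first step is to make the parameterization of the classes from Theorem~\ref{thm:cohomology.in.johnson.filtrations} fully explicit: each direct summand is attached to an ordered partial basis of $F_n/\Gamma^{s+1}$, obtained from the McCool subgroup construction of Section~\ref{sec:The last step}. As the remark preceding the conjecture observes, these summands are not closed under the $\Aut(F_n/\Gamma^{s+1})$-action, so translates under this action---naturally indexed by a poset of partial bases---produce further summands. I would then refine the Falk-Randell splitting used in the proof of Theorem~\ref{thm:cohomology.in.johnson.filtrations} to build nested sequences of McCool-type subgroups that detect pairwise distinct translated classes; the model here is the disjoint-tensor-factor argument of Section~\ref{sec:subgroup}, which forces independence by restriction.

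The main obstacle is verifying linear independence of infinitely many translates inside $H^i(J_n^s)$. A single $\Aut(F_n/\Gamma^{s+1})$-orbit lives a priori in a finitely generated $\BZ$-submodule, so naive orbit counting cannot suffice and detection must come from restriction to carefully chosen subgroups. Moreover, Corollary~\ref{cor:fragile.classes} says the constructed classes are fragile, so one must take care that the detection mechanism does not accidentally collapse them along the way. A sensible first test case is $n=3$, $s \geq 2$, $i=1$: combining the $L_s[V_2]^*$ summand of $H^1(J_3^s)$ guaranteed by Corollary~\ref{cor:top.cohomology.mccool.plus} with the McCullough-Miller-style arguments that show $J_3^s$ fails to be finitely generated should already exhibit infinitely many independent classes in $H^1(J_3^s)$, confirming the strategy. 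Should that detection fail, the plausible next source of infinitely many classes is a higher-order invariant---a secondary operation, or a higher Johnson homomorphism in the sense of Satoh \cite{Satoh06,Satoh09}---encoding commutator information that the degree-$s$ Johnson homomorphism alone cannot see.
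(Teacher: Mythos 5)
This statement is presented in the paper only as a conjecture --- the authors explicitly say they ``(weakly) conjecture'' it, and in Section~\ref{sec:further.comparison with earlier work} they merely \emph{ask} whether their methods imply it. So there is no proof in the paper to compare against, and your text is not a proof either: it is a plan whose decisive step (producing infinitely many linearly independent classes in a fixed degree) is the step you explicitly defer, flag as ``the main obstacle,'' and hedge with a fallback (``should that detection fail\ldots''). As written, nothing is established beyond what Theorem~\ref{thm:cohomology.in.johnson.filtrations} already gives, namely finite-rank summands.

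There is moreover a concrete reason the main line of your plan cannot work. Every class produced by the paper's construction lies in the image of $\tau_s^*:H^*(Hom(V_n,L_{s+1}[V_n])) \to H^*(J_n^s)$, and the source is the cohomology of a finitely generated free abelian group, hence a finitely generated exterior algebra; the image of $\tau_s^*$ is therefore a finitely generated abelian group in each degree. The conjugation action of $\Aut(F_n/\Gamma^{s+1})$ on $J_n^s$ is compatible with $\tau_s$ (it acts on the target $Hom(V_n,L_{s+1}[V_n])$ so that $\tau_s$ is equivariant), so every translate of a class in the image of $\tau_s^*$ remains in the image of $\tau_s^*$. Consequently no amount of translating, nesting, or restricting can extract an infinitely generated subgroup from these classes: the whole orbit of the construction is trapped in a finitely generated module. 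Any proof of the conjecture must detect classes invisible to the degree-$s$ Johnson homomorphism, which is exactly the ingredient you do not supply. Your proposed test case also does not close this gap: McCullough--Miller-type arguments show $J_3^s$ is not finitely generated as a \emph{group}, but the paper notes it is unknown whether its abelianization is finitely generated, and non-finite generation of a group does not by itself yield non-finite generation of $H^1$.
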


\section{Central series and the Johnson homomorphisms}\label{sec:A review of some central series}

Recall that the {\it descending central series} of a group $\pi$ is
the  sequence of subgroups
\[ \pi = \Gamma^1(\pi) \supset \Gamma^2(\pi) \supset \cdots \supset  \Gamma^p(\pi)\supset \cdots  \] with $\Gamma^p(\pi) = [
\pi,\Gamma^{p-1}(\pi)].$ It is natural to study the Lie algebra
\[ gr_*^{DCS}(\pi) = \bigoplus_{p \geq 1} gr_p^{DCS}(\pi) \]
associated to its descending central series with the graded terms
given by \[ gr_p^{DCS}(\pi) = \Gamma^p(\pi)/\Gamma^{p+1}(\pi) \]
with bracket given by
\[ [ -, -] : gr_p^{DCS}(\pi) \otimes gr_q^{DCS}(\pi) \mapsto gr_{p+q}^{DCS}(\pi) \]
induced by the commutator on the level of $\pi$ given by
$$[x,y] =
x^{-1}y^{-1}xy, \ x,y \in \pi.$$ In case $\pi$ is  residually
nilpotent, the descending central series filtration is convergent.
Note that the Lie algebra $gr_*^{DCS}(\pi)$ is also graded, but
fails to satisfy the axioms for a graded Lie algebra because of sign
conventions; this failure can be remedied by doubling all degrees to
obtain a graded Lie algebra.

A classical example is that of the free group $F_n$
first investigated by P.~Hall and E.~Witt. Recall that $V =
V_n$ denotes a free abelian group of rank $n$, the first homology
group of $F_n$. As in Section \ref{sec:introduction}, $L[V_n]$
denotes the free Lie algebra generated by $V_n$. Hall \cite{Hall}
and Witt \cite{Witt} proved that
\[ L[V_n] = gr_*^{DCS}(F_n) \] where $V_n$ is a free abelian group of rank $n$ with a choice of basis given by $\{ x_1, \ldots, x_n \}$,
and each $x_i$ is the image under the projection $F_n \to V_n \simeq
H_1(F_n)$ of a basis element for $F_n$.

The graded derivations of a graded Lie algebra inherit the structure
of the graded Lie algebra. In the case of the free Lie algebra
$L[V_n]$, the Lie algebra of graded derivations $$Der(L[V_n])$$ is
additively isomorphic to the direct sum $$\bigoplus_{s \geq 1}
Hom(V_n,L_{s+1}[V_n]).$$ Writing
$$Der_s(L[V_n]) = Hom(V_n,L_{s+1}[V_n]).$$ the Lie bracket is given by
a bilinear pairing
$$Der_s(L[V_n]) \otimes Der_t(L[V_n]) \to Der_{s+t-1}(L[V_n])$$ as
developed by M.~Kontsevich \cite{Kontsevich}; see also
\cite{Conant.Vogtmann} and T.~Jin \cite{Jin}.

\begin{remark}\label{remark:hom.L1} A natural variation is the Lie algebra
$$\widehat{Der}(L[V_n]) = \oplus_{s  \geq 1} Hom(V_n,L_{s}[V_n])$$
given by the direct sum $$Der(L[V_n]) \oplus Hom(V_n,V_n).$$ The
additional group  $$Hom(V_n,V_n) =  End(V_n) = \oplus_{n^2}\mathbb
Z$$ is not used in the computations below.
\end{remark}

Turn now to the Johnson filtration $\{ J_n^s \}$ of $Aut(F_n)$.
Recall that the $s$-th term  $J_n^s$ is the kernel of the``reduction
map'' $$Aut(F_n) \to Aut(F_n/\Gamma^{s+1}).$$ It is well-known that
the intersection of all the terms of the Johnson filtration is
trivial. Furthermore, the successive quotients $gr_s^J(IA_n) =
J_n^s/J_n^{s+1}$ are torsion-free finitely generated abelian groups.
The quotients $gr_s^J(IA_n)$ give a graded Lie algebra which is
free as a $\BZ$-module, with bracket inherited from the commutator.
The direct sum of these quotients
\[ gr_*^J(IA_n) = \bigoplus_{s \geq 1} gr_s^J(IA_n) \]
admits a natural structure of a Lie algebra, called the {\it Johnson
Lie algebra} of the group $IA_n$ \cite{Andreadakis,
Cohen-Pakianathan, Farb, Kawazumi}.

The first {\it Johnson homomorphism} $\tau_1$ on $IA_n$ is defined
by
\begin{equation*}
\tau_1: IA_n \longrightarrow Hom(V_n,L_2[V_n])
\end{equation*}
by (with some abuse of notation) $\tau_1(\phi)(w)$ equal to the image of
$\phi(\tilde{x})\tilde{x}^{-1}$ in $L_2[V_n]$, where $\phi \in IA_n$, the element $x \in
V_n$, and $\tilde{x}$ is a lift of $x$ to $F_n/\Gamma^3 F_n$. It is
straightforward to check that the kernel of $\tau_1$ is $J_n^2$.
Define inductively the $s$-th Johnson homomorphism on the kernel
$J_n^s$ of $\tau_{s-1}$ by
\begin{equation*}\label{equation*:Johnson.homomorphism}
\tau_s: J_n^s \longrightarrow Hom(V_n,L_{s+1}[V_n])
\end{equation*}
with $\tau_s(\phi)(x) = \phi(\tilde{x})\tilde{x}^{-1}$ for $\phi \in
J_n^s$, and $x \in H_1(F_n)$ for any lift $\tilde{x} \in
F_n/\Gamma^{s+2}F_n$. As the group $J_n^{s+1}$ is precisely the kernel
of
$$\tau_s: J_n^s \to Hom(V_n,L_{s+1}[V_n])$$
there is an induced map $$\tau_s: gr_s^J(IA_n)= J_n^s/J_n^{s+1} \to
Hom(V_n,L_{s+1}[V_n]).$$ Passing to direct sums, there is an induced
map $$\bigoplus_{s \geq 1}\tau_s: \bigoplus_{s \geq 1} J_n^s \to
\bigoplus_{s \geq 1} Hom(V_n,L_{s+1}[V_n]).$$ With the
identification of the Lie algebra of graded derivations
$Der(L[V_n])$ with $\oplus_{s \geq 1} Hom(V_n,L_{s+1}[V_n])$, the
induced homomorphism $$J: gr_*^J(IA_n) \to Der(L[V_n])$$ is a
morphism of Lie algebras \cite{Andreadakis, Cohen-Pakianathan, Farb,
Kawazumi}.

Thus, there are two natural structures of Lie algebras for $IA_n$
given by $gr_*^{DCS}(\IA)$ and $gr_*^J(\IA_n)$.

\section{Properties of McCool's group}\label{sec:Properties of McCool's group}

Recall Magnus's generating set for $IA_n$ \cite{Magnus}, consisting
of automorphisms
\[ \mathcal{M}_n = \{ \alpha_{ij} \ | \ i \neq j \} \cup \{ A_{ijk} \ | \ i \neq j,k; \ j<k \} \]
where
\begin{eqnarray*}
\alpha_{ij}(x_r) = \left \{ \begin{array}{ll}
x_r & r \neq i \\
x_j x_r x_j^{-1} & r=i
\end{array}
\right.
\end{eqnarray*}

\begin{eqnarray*}
A_{ijk}(x_r) = \left \{ \begin{array}{ll}
x_r & r \neq i \\
\lbrack x_j, x_k \rbrack x_i & r=i.
\end{array}
\right.
\end{eqnarray*}

McCool \cite{McCool} proved that the subgroup $\PS_n$ of {\it pure
symmetric automorphisms} (or the {\it McCool group}) of $IA_n$,
consisting of those automorphisms which map each generator $x_i$ to
a conjugate of itself, is generated by the subset of Magnus
generators
\begin{equation*}
\PS_n = \langle \alpha_{ij} \ | \ i \neq j \rangle.
\end{equation*}
McCool provided a finite presentation of $\PS_n$ in terms of these
generators. The group $\PS_n$ is interesting to topologists as it
appears as the mapping class group of the complement of $n$ unlinked
circles in $\BR^3$, and is thus a  generalization of the pure braid
group (see \cite{Goldsmith}, for example). The pure braid group is
itself realized as a subgroup of $\PS_n$. McCool proved the
following theorem.

\begin{thm} \label{thm:McCool relations}
A presentation of $P\Sigma_n$ is given by generators $\alpha_{k,j}$
together with the following relations.
\begin{enumerate}
\item $\alpha_{i,j}\cdot \alpha_{k,j}\cdot \alpha_{i,k} = \alpha_{i,k} \cdot
\alpha_{i,j} \cdot \alpha_{k,j}$ for $i,j,k$ distinct.

\item $[\alpha_{k,j}, \alpha_{s,t}] = 1$ if $\{j,k\} \cap \{s,t \}=
\phi$.

\item $[\alpha_{i,j},\alpha_{k,j}] = 1$ for
$i,j,k$ distinct.

\item $[\alpha_{i,j} \cdot \alpha_{k,j}, \alpha_{i,k}] = 1$ for
$i,j,k$ distinct (redundantly).

\end{enumerate}
\end{thm}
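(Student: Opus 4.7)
The plan is to prove the theorem in two directions: first that relations (1)--(4) hold in $\PS_n$, and then that they suffice.

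The first direction is routine: each identity can be checked by evaluating both sides on every generator $x_r$ of $F_n$. For $r$ outside the indices appearing in the relation, both sides act trivially; for the remaining values of $r$, the verification collapses to a short commutator manipulation. For example, to verify (1) on $x_i$, one computes that both $\alpha_{i,j}\alpha_{k,j}\alpha_{i,k}(x_i)$ and $\alpha_{i,k}\alpha_{i,j}\alpha_{k,j}(x_i)$ simplify to $x_j x_k x_i x_k^{-1} x_j^{-1}$, while on the remaining generators both sides act identically.

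For sufficiency I would induct on $n$, exploiting the split short exact sequence
\[
1 \longrightarrow K_n \longrightarrow \PS_n \xrightarrow{\pi} \PS_{n-1} \longrightarrow 1,
\]
where $\pi$ is induced by the retraction $F_n \twoheadrightarrow F_{n-1}$ killing $x_n$, the splitting is the inclusion $\PS_{n-1} \hookrightarrow \PS_n$ fixing $x_n$, and $K_n = \langle \alpha_{n,1},\ldots,\alpha_{n,n-1}\rangle$. A Fadell--Neuwirth-style fibration argument, together with the realization of $\PS_n$ as the motion group of $n$ unlinked circles in $\BR^3$ (cf.\ \cite{Goldsmith}), shows that $K_n$ is freely generated by the displayed elements. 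The standard presentation theorem for a semidirect product then presents $\PS_n$ on the generators $\alpha_{i,j}$ using the inductively available relations among the $\alpha_{i,j}$ with $i,j<n$, together with conjugation relations $\alpha_{i,j}\alpha_{n,k}\alpha_{i,j}^{-1} = w_{i,j,k}$ for each triple with $i,j<n$ and $k \leq n-1$, where $w_{i,j,k} \in K_n$ records the conjugation action.

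The main obstacle is computing each word $w_{i,j,k}$ explicitly and showing that the resulting conjugation relations are consequences of (1)--(4). The analysis breaks up according to the pattern of coincidences among $\{i,j,k,n\}$: the disjoint cases recover instances of (2), the singly-overlapping cases yield (3) or (4), and the triply-overlapping cases reproduce (1). The base case $n=2$ is immediate, since $\PS_2$ is freely generated by $\alpha_{1,2}, \alpha_{2,1}$ (it coincides with $\mathrm{Inn}(F_2) \cong F_2$), and the four relation families are vacuous.
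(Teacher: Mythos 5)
First, a framing remark: the paper gives no proof of this statement at all --- it is quoted from McCool \cite{McCool} ("McCool proved the following theorem"), so there is no internal argument to compare yours against. Your first direction, verifying that relations (1)--(4) hold by evaluating both sides on each generator $x_r$, is correct and routine; your computation for relation (1) on $x_i$ checks out.

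The sufficiency direction, however, contains a fatal structural error. The kernel $K_n$ of the map $\pi\colon \PS_n \to \PS_{n-1}$ induced by killing $x_n$ is \emph{not} $\langle \alpha_{n,1},\dots,\alpha_{n,n-1}\rangle$. Every $\alpha_{j,n}$ with $j<n$ also lies in $\ker\pi$ (setting $x_n=1$ turns $x_j \mapsto x_n x_j x_n^{-1}$ into the identity on $F_{n-1}$), yet $\alpha_{j,n}$ does not lie in $\langle \alpha_{n,1},\dots,\alpha_{n,n-1}\rangle$, since every element of that subgroup fixes $x_j$ for $j<n$ while $\alpha_{j,n}(x_j)=x_nx_jx_n^{-1}\neq x_j$. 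So $K_n$ needs all $2(n-1)$ generators $\alpha_{n,j},\alpha_{j,n}$ $(j<n)$ and is not free for $n\ge 3$. Worse, $\langle \alpha_{n,1},\dots,\alpha_{n,n-1}\rangle$ is not even normal in $\PS_n$: a direct computation gives $\alpha_{1,n}\alpha_{n,1}\alpha_{1,n}^{-1}(x_1)=x_nx_1x_n^{-1}x_1x_nx_1^{-1}x_n^{-1}\neq x_1$, so this conjugate cannot lie in a subgroup all of whose elements fix $x_1$. You have imported the decomposition that is valid for the \emph{upper triangular} group $\PS_n^+$ (Lemma \ref{lem:sequence-action}, from \cite{cvwp}, where the generators $\alpha_{j,n}$ with $j<n$ are excluded by definition) into the full McCool group, where it fails; in particular $\PS_{n-1}$ and the $\alpha_{n,j}$ together do not even generate $\PS_n$. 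Consequently the semidirect-product presentation scheme and the case analysis of the conjugation words $w_{i,j,k}$ cannot be carried out as described; repairing it would require a presentation of the genuinely more complicated kernel $K_n$, itself a nontrivial problem. This is also not the route McCool takes: his proof realizes $\PS_n$ as the stabilizer of the set of conjugacy classes $\{[x_1],\dots,[x_n]\}$ under $\Aut(F_n)$ and applies his general peak-reduction machinery for presenting such stabilizers via Whitehead automorphisms. Your base case $\PS_2=\mathrm{Inn}(F_2)\cong F_2$ is fine, but the inductive step as written is unsound.
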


Consider the subgroup $\PS_n^+$ of $\PS_n$ generated by the subset
\begin{eqnarray*}
\PS_n^+ = \langle \alpha_{ij} \ | \ 1 \leq j < i \leq n \rangle.
\end{eqnarray*}
This group is referred to as the {\it upper triangular McCool group}
in \cite{cvwp}. There exists an exact sequence
\begin{equation*}
1 \longrightarrow F_{n-1} \longrightarrow \PS_n^+ \longrightarrow
\PS_{n-1}^+ \longrightarrow 1
\end{equation*}
where the induced action of $\PS_{n-1}^+$ on $H_1(F_{n-1})$ is
trivial. This fact is recorded in \cite{cvwp} as Lemma
\ref{lem:sequence-action}, and it should be compared with the case
of the pure braid group (See \cite{frc}, pages 281--283, or
\cite{Falk-Randell} for instance.).

The map $\pi:\PS_n^+ \to \PS_{n-1}^+$ in \cite{cvwp} is recalled
next for the convenience of the reader, as this map is used below.
\[
\pi(\alpha_{k,i}) =
\begin{cases}
\alpha_{k,i} & \text{if $i < n$, and $k < n$ ,}\\
1 & \text{if $i = n$ or $k=n$.}
\end{cases}
\] Furthermore, there is a natural cross-section for $\pi$, $$\sigma: \PS_{n-1}^+ \to \PS_{n}^+ $$ defined by
$$\sigma(\alpha_{k,i}) = \alpha_{k,i}.$$ Thus the group $\PS_{n-1}^+$ is regarded as a subgroup of
$\PS_{n}^+$ below.

\section{The descending central series}\label{sec:DCS of McCool}

The purpose of this section is to develop properties of the
descending central series for $\PS_n^+$ by using the following
theorem of Falk-Randell \cite{Falk-Randell}:
\begin{thm}[Falk-Randell \cite{Falk-Randell}, Theorem 3.1]
\label{thm:Falk-Randell} Suppose that $1 \to A \to B \to C \to 1$ is
a split exact sequence of groups, and the induced conjugation action
of $C$ on $H_1(A)$ is trivial (that is, $[A,C] \subset [A,A]$). Then
the sequence of induced maps
\begin{equation*}
1 \longrightarrow \Gamma^s A \longrightarrow \Gamma^s B
\longrightarrow \Gamma^s C \longrightarrow 1
\end{equation*}
is split exact for every $s$.

Furthermore, there is an induced exact sequence on the level of
associated graded modules
\begin{equation*}
0 \longrightarrow gr_s^{DCS}(A) \longrightarrow gr_s^{DCS}(B)
\longrightarrow gr_s^{DCS}(C) \longrightarrow 0
\end{equation*} which is additively split.

\end{thm}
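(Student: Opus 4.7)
The plan is to prove by induction on $s$ that $\Gamma^s B = \Gamma^s A \cdot \sigma(\Gamma^s C)$, viewed as a subset of $B$. Granting this identification, the short exact sequence of groups is immediate: the quotient $B \to C$ restricts to a surjection $\Gamma^s B \to \Gamma^s C$, the map $\sigma|_{\Gamma^s C}$ is a section landing in $\Gamma^s B$, and the kernel is $\Gamma^s B \cap A = \Gamma^s A$ because $\sigma(C) \cap A = \{1\}$. The induced sequence on associated graded modules then follows by passing to the quotients $\Gamma^s/\Gamma^{s+1}$, and inherits its splitting from $\sigma$.

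The technical core is the nested inclusion
\[
[\sigma(\Gamma^t C), \Gamma^k A] \subset \Gamma^{k+t} A \qquad \text{for all } k, t \geq 1,
\]
which I would establish by induction on $t$. For the base case $t = 1$, conjugation by $\sigma(c)$ is an automorphism of $A$ and so descends to a graded Lie algebra automorphism of $gr_*^{DCS}(A)$; the hypothesis $[A,C] \subset [A,A]$ is exactly the vanishing of this action on $gr_1^{DCS}(A) = A^{\mathrm{ab}}$. Since $gr_*^{DCS}(A)$ is generated as a Lie algebra in degree one, the action is the identity in every degree, giving $[\sigma(C), \Gamma^k A] \subset \Gamma^{k+1} A$. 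For the step from $t$ to $t+1$, apply the three subgroups lemma to $X = \sigma(C)$, $Y = \sigma(\Gamma^t C)$, $Z = \Gamma^k A$ modulo the normal subgroup $N = \Gamma^{k+t+1} A$ (which is normal in $B$ because it is characteristic in $A$): both $[X, [Y,Z]]$ and $[Y, [Z,X]]$ lie in $N$ by the inductive hypothesis together with the $t=1$ case, so the lemma forces $[\Gamma^k A, \sigma(\Gamma^{t+1} C)] = [Z, [X, Y]] \subset N$.

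With the nested inclusion in hand, set $K_s := \Gamma^s A \cdot \sigma(\Gamma^s C)$. The case $(k,t) = (1,s)$ shows that $A$ conjugates $\sigma(\Gamma^s C)$ into $K_s$, while $\sigma(C)$ normalizes both factors (the first because $\Gamma^s A$ is characteristic in $A$, the second because $\Gamma^s C$ is normal in $C$), so $K_s$ is normal in $B$. Under the inductive assumption $\Gamma^s B \subset K_s$, one has $\Gamma^{s+1} B = [B, \Gamma^s B] \subset [B, K_s]$, and normality of $K_{s+1}$ reduces the problem to checking that generating commutators $[b,k]$ with $b \in A \cup \sigma(C)$ and $k \in \Gamma^s A \cup \sigma(\Gamma^s C)$ all lie in $K_{s+1}$. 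The four cases are handled by $[A, \Gamma^s A] \subset \Gamma^{s+1} A$, the nested inclusion applied with $(k,t) = (1,s)$ and $(s,1)$, and $[\sigma(C), \sigma(\Gamma^s C)] = \sigma(\Gamma^{s+1} C)$. Combined with the trivial inclusions $\Gamma^s A, \sigma(\Gamma^s C) \subset \Gamma^s B$, this closes the induction. The main obstacle is the iterated three subgroups bookkeeping in proving the nested inclusion; once one is careful to work modulo a subgroup that is normal in $B$, the remainder is formal commutator calculus.
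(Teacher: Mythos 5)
The paper does not prove this statement; it is quoted verbatim as Theorem 3.1 of Falk--Randell and used as a black box. Your argument is correct and is essentially the strategy of the original Falk--Randell proof: establish $[\sigma(\Gamma^t C),\Gamma^k A]\subset\Gamma^{k+t}A$ (your base case via the triviality of the induced action on $gr_*^{DCS}(A)$, which is generated in degree one, and the inductive step via the three-subgroups lemma modulo the $B$-normal subgroup $\Gamma^{k+t+1}A$), then deduce $\Gamma^s B=\Gamma^s A\cdot\sigma(\Gamma^s C)$ by induction on $s$, from which the split exactness of both displayed sequences follows.
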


To apply Theorem \ref{thm:Falk-Randell} to the group $\PS_n^+$,
consider a lemma given by the second conclusion of Theorem $1.2$ of
\cite{cvwp}.
\begin{lem}\label{lem:sequence-action}
The sequence
\begin{equation*}
1 \longrightarrow F_{n-1} \longrightarrow \PS_n^+ \longrightarrow
\PS_{n-1}^+ \longrightarrow 1
\end{equation*}
induced by the map $x_n \mapsto {\rm id}_{F_{n-1}}$ is split exact,
where $F_{n-1}$ is the free group on the generators
\begin{equation*}
\alpha_{n,1}, \ldots, \alpha_{n,n-1}
\end{equation*}
Furthermore the action of $\PS_{n-1}^+$ on $H_1(F_{n-1})$ is
trivial.
\end{lem}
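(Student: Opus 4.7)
The plan is to construct $\pi$ and $\sigma$ explicitly at the level of automorphisms and then identify the kernel. Define $\pi:\PS_n^+\to\PS_{n-1}^+$ via the retraction $F_n\twoheadrightarrow F_n/\langle\langle x_n\rangle\rangle\cong F_{n-1}$: for $\phi\in\PS_n^+$, each $x_i$ goes to a conjugate $w_i x_i w_i^{-1}$ with $w_i$ a word in $x_1,\ldots,x_{i-1}$ by the upper-triangular condition, so for $i<n$ the conjugator does not involve $x_n$ and the formulas descend to a pure symmetric automorphism of $F_{n-1}$; on Magnus generators $\pi(\alpha_{i,j})=\alpha_{i,j}$ for $i<n$ and $\pi(\alpha_{n,j})=1$. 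The section $\sigma:\PS_{n-1}^+\hookrightarrow\PS_n^+$ is the inclusion obtained by extending an automorphism of $F_{n-1}$ to $F_n=F_{n-1}*\langle x_n\rangle$ by fixing $x_n$, sending $\alpha_{i,j}\mapsto\alpha_{i,j}$; clearly $\pi\circ\sigma=\mathrm{Id}$.

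Let $K=\langle\alpha_{n,1},\ldots,\alpha_{n,n-1}\rangle\subseteq\ker(\pi)$. The needed conjugation formulas fall directly out of McCool's relations: relation~(1) with $i=n$ yields $\alpha_{s,t}\alpha_{n,s}\alpha_{s,t}^{-1}=\alpha_{n,t}^{-1}\alpha_{n,s}\alpha_{n,t}$; relation~(3) yields $\alpha_{s,t}\alpha_{n,t}\alpha_{s,t}^{-1}=\alpha_{n,t}$; and relation~(2) yields $\alpha_{s,t}\alpha_{n,j}\alpha_{s,t}^{-1}=\alpha_{n,j}$ whenever $j\notin\{s,t\}$. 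Each conjugate lies in $K$, so $K\triangleleft\PS_n^+$. Since $\PS_n^+$ is generated by $K\cup\sigma(\PS_{n-1}^+)$, every element factors as $k\cdot\sigma(h)$ with $k\in K$ and $h\in\PS_{n-1}^+$; applying $\pi$ forces $h=1$ on elements of $\ker(\pi)$, so $\ker(\pi)=K$ and the split short exact sequence follows.

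For the freeness of $K$ on the stated generators, I would use the action on $x_n$. Every $\phi\in K$ fixes $x_1,\ldots,x_{n-1}$ and satisfies $\phi(x_n)=w_\phi x_n w_\phi^{-1}$ with $w_\phi\in F_{n-1}$, and a direct computation gives $w_{\alpha\circ\beta}=w_\beta w_\alpha$, so $\Psi:K\to F_{n-1}$, $\phi\mapsto w_\phi$, is an anti-homomorphism. If $\phi=\mathrm{Id}$ then $w_\phi$ centralizes $x_n$ in $F_n$, hence lies in $C_{F_n}(x_n)\cap F_{n-1}=\{1\}$, giving injectivity. Since $\Psi(\alpha_{n,j})=x_j$ hits a free basis of $F_{n-1}$, $\Psi$ is an isomorphism and $K$ is free on $\{\alpha_{n,1},\ldots,\alpha_{n,n-1}\}$.

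The triviality of the induced action of $\PS_{n-1}^+$ on $H_1(F_{n-1})=H_1(K)$ is read off the same three conjugation formulas: each right-hand side reduces modulo $[K,K]$ to the generator $\alpha_{n,j}$ appearing on the left, so every generator $\sigma(\alpha_{s,t})$ acts as the identity on $H_1(K)$. The main obstacle is the case analysis for the conjugation formulas, in particular checking that all conjugates of the $\alpha_{n,j}$ by upper-triangular Magnus generators remain inside $K$; once this bookkeeping is carried out, normality of $K$, freeness, the splitting, and the trivial action on first homology all fall out cleanly.
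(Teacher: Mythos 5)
Your proof is correct, but it is worth noting that the paper itself does not prove this lemma at all: it is quoted as the second conclusion of Theorem~1.2 of \cite{cvwp} (Cohen--Pakianathan--Vershinin--Wu), so you have supplied a complete self-contained argument where the paper defers to a reference. Your route is the natural one and matches the structure of the cited result: $\pi$ is induced by killing $x_n$ (legitimate because every $\phi\in\PS_n^+$ sends $x_i$ to $w_ix_iw_i^{-1}$ with $w_i$ a word in $x_1,\ldots,x_{i-1}$, so the normal closure of $x_n$ is preserved and the conjugators for $i<n$ survive intact), $\sigma$ is the evident section, and the kernel is identified with $K=\langle\alpha_{n,1},\ldots,\alpha_{n,n-1}\rangle$ via McCool's relations. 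The anti-isomorphism $\Psi:K\to F_{n-1}$, $\phi\mapsto w_\phi$ with $\phi(x_n)=w_\phi x_nw_\phi^{-1}$, together with $C_{F_n}(x_n)\cap F_{n-1}=\{1\}$, is exactly the right device for freeness, and reading the trivial action on $H_1(F_{n-1})$ off the three conjugation formulas is precisely what makes the Falk--Randell theorem applicable later. Two small points to tighten: (i) to conclude $K\triangleleft\PS_n^+$ you should also note that conjugation by $\alpha_{s,t}^{-1}$ preserves $K$ --- this is immediate since your formulas show conjugation by $\alpha_{s,t}$ restricts to an \emph{automorphism} of the free group $K$ (it sends the basis to a basis), but checking only $\alpha_{s,t}K\alpha_{s,t}^{-1}\subseteq K$ for generators is not formally sufficient; (ii) your injectivity sentence is stated in the direction ``$\phi=\mathrm{Id}$ implies $w_\phi=1$,'' whereas what is needed is ``$w_\phi=1$ implies $\phi=\mathrm{Id}$,'' which is immediate since such a $\phi$ fixes every basis element; the centralizer computation you give is really what shows $w_\phi$ is well defined. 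Neither issue affects the validity of the argument.
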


A consequence of Lemma \ref{lem:sequence-action} follows.
\begin{lem}\label{lem:exact sequence}
The sequence
\begin{equation}\label{exact sequence}
1 \lra \Gamma^s F_{n-1} \lra \Gamma^s \PS_n^+ \lra \Gamma^s
\PS_{n-1}^+ \lra 1
\end{equation}
is split exact.
\end{lem}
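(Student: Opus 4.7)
The plan is to invoke Theorem \ref{thm:Falk-Randell} directly, applied with $A = F_{n-1}$, $B = \PS_n^+$, and $C = \PS_{n-1}^+$. Once the two hypotheses of that theorem are verified for this triple, the desired split exactness of
\[
1 \lra \Gamma^s F_{n-1} \lra \Gamma^s \PS_n^+ \lra \Gamma^s \PS_{n-1}^+ \lra 1
\]
for every $s \geq 1$ is immediate from its conclusion.

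First I would recall the split short exact sequence
\[
1 \lra F_{n-1} \lra \PS_n^+ \lra \PS_{n-1}^+ \lra 1
\]
given by Lemma \ref{lem:sequence-action}, where $F_{n-1}$ is the free group on the generators $\alpha_{n,1},\ldots,\alpha_{n,n-1}$, with splitting given by the cross-section $\sigma(\alpha_{k,i}) = \alpha_{k,i}$ recorded in Section \ref{sec:Properties of McCool's group}. Next I would cite the second assertion of that same lemma, namely that the induced conjugation action of $\PS_{n-1}^+$ on $H_1(F_{n-1})$ is trivial. Since triviality of the action on the abelianization is equivalent to $[F_{n-1},\PS_{n-1}^+] \subset [F_{n-1}, F_{n-1}]$, this is precisely the second hypothesis required by Theorem \ref{thm:Falk-Randell}.

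With both hypotheses verified, Theorem \ref{thm:Falk-Randell} produces the split exact sequence on the $s$-th terms of the descending central series, which is exactly the statement of the lemma. There is essentially no obstacle: the content of the lemma is entirely absorbed by the Falk--Randell machinery, and the substantive work lies in the inputs of Lemma \ref{lem:sequence-action}, which are proved in \cite{cvwp}. As a bookkeeping remark, the splitting at level $s$ is the restriction $\sigma|_{\Gamma^s \PS_{n-1}^+}$; since $\sigma$ is a group homomorphism, it carries iterated commutators of weight $s$ in $\PS_{n-1}^+$ to iterated commutators of weight $s$ in $\PS_n^+$, hence lands in $\Gamma^s \PS_n^+$ and furnishes the required section.
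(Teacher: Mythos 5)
Your proof is correct and follows exactly the paper's intended route: the paper derives this lemma directly from Lemma \ref{lem:sequence-action} by applying Theorem \ref{thm:Falk-Randell}, which is precisely your argument. The verification of the two hypotheses and the remark about the section $\sigma$ restricting to $\Gamma^s\PS_{n-1}^+$ are exactly the content the paper leaves implicit.
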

\noindent The next proposition then follows directly.
\begin{prop}\label{exact-sequences}
If $s \geq 2$, then there is an additive isomorphism
\begin{equation*} \bigoplus_{q=2}^n L_s[V_{q-1}] \to \Gamma^s \PS_n^+/\Gamma^{s+1} \PS_n^+.
 \end{equation*}
Furthermore, each free Lie algebra $L[V_{q-1}]$ is a sub-Lie algebra
of $gr_*(\PS_n^+)$, and there is an induced isomorphism of abelian
groups $$\bigoplus_{q=2}^n L[V_{q-1}] \to gr_*(\PS_n^+)$$ (where
this isomorphism does not preserve the structure as Lie algebras).

\

If $s =1$, then there is an additive isomorphism
\begin{equation*} \bigoplus_{q=2}^n V_{q-1} \to \Gamma^1 \PS_n^+/\Gamma^{2} \PS_n^+ = H_1(\PS_n^+).
 \end{equation*}
\end{prop}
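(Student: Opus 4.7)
\emph{The plan} is to induct on $n$, using the Falk--Randell split exact sequence of Lemma \ref{lem:exact sequence} together with the classical Hall--Witt identification $gr_*^{DCS}(F_{m}) = L[V_{m}]$ recalled in Section \ref{sec:A review of some central series}. The entire content of the proposition is built into Lemma \ref{lem:exact sequence}, so the remaining work is bookkeeping.

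For the base case $n=2$, McCool's presentation (Theorem \ref{thm:McCool relations}) imposes no relations on the single generator $\alpha_{2,1}$, so $\PS_2^+$ is infinite cyclic and its associated graded matches $L_*[V_1]$. For the inductive step, the second conclusion of Theorem \ref{thm:Falk-Randell} applied to Lemma \ref{lem:sequence-action} yields, for each $s \geq 1$, an additively split short exact sequence
\begin{equation*}
0 \to gr_s^{DCS}(F_{n-1}) \to gr_s^{DCS}(\PS_n^+) \to gr_s^{DCS}(\PS_{n-1}^+) \to 0.
\end{equation*}
By Hall--Witt the left term is $L_s[V_{n-1}]$, and by the inductive hypothesis the right term is $\bigoplus_{q=2}^{n-1} L_s[V_{q-1}]$; reindexing produces the required additive isomorphism $\bigoplus_{q=2}^{n} L_s[V_{q-1}] \to gr_s^{DCS}(\PS_n^+)$. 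The $s = 1$ statement is the $s=1$ specialization of the same argument, since $L_1[V_{q-1}] = V_{q-1}$ and $gr_1^{DCS}(\PS_n^+) = H_1(\PS_n^+)$.

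For the sub-Lie-algebra assertion, iterating the cross-section $\sigma$ of Section \ref{sec:Properties of McCool's group} embeds $\PS_q^+$ as a subgroup of $\PS_n^+$ for each $2 \leq q \leq n$. Composing with the normal inclusion $F_{q-1} \hookrightarrow \PS_q^+$ from Lemma \ref{lem:sequence-action} produces a subgroup $F_{q-1} \subset \PS_n^+$, and iterated application of Theorem \ref{thm:Falk-Randell} at each stage of the tower $\PS_q^+ \hookrightarrow \PS_{q+1}^+ \hookrightarrow \cdots \hookrightarrow \PS_n^+$ shows that the induced Lie algebra morphism $L[V_{q-1}] = gr_*^{DCS}(F_{q-1}) \to gr_*^{DCS}(\PS_n^+)$ is injective, exhibiting $L[V_{q-1}]$ as a Lie subalgebra. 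The resulting global map $\bigoplus_{q=2}^n L[V_{q-1}] \to gr_*(\PS_n^+)$ is only additive because the McCool relations (Theorem \ref{thm:McCool relations}) generically produce nonzero brackets between $\alpha_{i,j}$'s lying in different $F$-strata --- a feature of the statement, not an obstacle to the proof. I foresee no genuine difficulty; the whole proposition is an unpacking of Lemma \ref{lem:exact sequence} and Hall--Witt.
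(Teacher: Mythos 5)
Your proof is correct and follows essentially the same route as the paper, which derives the proposition directly from Lemma \ref{lem:sequence-action} via the Falk--Randell theorem (Theorem \ref{thm:Falk-Randell}) and the Hall--Witt identification $gr_*^{DCS}(F_{m}) = L[V_m]$, with the induction on $n$ and the base case $\PS_2^+ \cong \mathbb{Z}$ being exactly the bookkeeping the authors leave implicit. Your treatment of the sub-Lie-algebra assertion via the iterated cross-section $\sigma$ is also the intended argument.
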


\section{On a subgroup}\label{sec:subgroup}

The purpose of this section is to define subgroups of $\PS_n^+$,
denoted $H(n,k)$ and $G(n,k,j)$; these groups will appear in
the proof of the main Theorem.

\begin{definition}\label{def:subgroups}
Fix integers $ 1 \leq j \leq k-1 \leq n-1$, and define
$$G(n,k,j)$$ as the subgroup of $\PS_n^+$ generated by the elements
$$\alpha_{k,1},\alpha_{k,2}, \cdots, \alpha_{k,j}.$$

The group $$H(n,k)$$ is defined to be the direct product
$$G(n,k,k-1)\times G(n,k+1,k-1)\times \cdots \times G(n,n,k-1).$$
\end{definition}

Properties of the groups $G(n,k+r,k-1)$, and $H(n,k)$ are recorded
next.
\begin{lem}\label{lem:commuting.elements}
The groups $G(n,k+r,k-1)$ are free subgroups of the group $\PS_n^+$
for $2 \leq k \leq k+r \leq n$. Furthermore, if
\begin{enumerate}
      \item $x \in G(n,k+s,k-1)$ for $0 \leq s \leq n-k$, and
      \item $y \in G(n,k+r,k-1)$ for $ 0 \leq s  < r \leq n - r$, then
    \end{enumerate} $$xy=yx,$$ and there are induced homomorphisms
$$\Theta(n,k): G(n,k,k-1)\times G(n,k+1,k-1)\times \cdots \times G(n,n,k-1) \to
\PS_n^+$$ such that $$\Theta(n,k)(\alpha_{t,s}) = \alpha_{t,s}\in
G(n,k+r,k-1)$$ for
$$ 0 \leq r \leq n-k.$$

\end{lem}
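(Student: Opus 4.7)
The plan is to verify the three assertions in turn, using Lemma \ref{lem:sequence-action} together with the relations in Theorem \ref{thm:McCool relations}.

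For the freeness of $G(n,m,k-1)$ with $m = k+r$, I would induct on $n \geq m$. In the base case $n = m$, the group $G(m,m,k-1)$ is generated by $\alpha_{m,1}, \ldots, \alpha_{m,k-1}$, which form a subset of the free basis of the kernel $F_{m-1}$ of the projection $\PS_m^+ \to \PS_{m-1}^+$ in Lemma \ref{lem:sequence-action}, and so freely generate a free subgroup. For $n > m$, both the projection $\pi:\PS_n^+ \to \PS_{n-1}^+$ and its cross-section $\sigma$ fix each generator $\alpha_{m,j}$ with $j \leq k-1$; restricting produces mutually inverse isomorphisms between $G(n,m,k-1)$ and $G(n-1,m,k-1)$, and freeness then transfers by the inductive hypothesis.

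For the commutativity claim, I would fix $s < r$ and reduce to checking that $[\alpha_{k+s,j},\alpha_{k+r,i}] = 1$ for all $1 \leq i,j \leq k-1$. Since $i,j \leq k-1 < k \leq k+s < k+r$, the index $k+s$ differs from both $k+r$ and $i$, and similarly $j$ differs from $k+r$. If $i \neq j$, the subscript sets $\{k+s,j\}$ and $\{k+r,i\}$ are disjoint and relation (2) of Theorem \ref{thm:McCool relations} yields the commutator identity; if $i = j$, then $k+s, j, k+r$ are all distinct and relation (3) gives it. Hence each generator of $G(n,k+s,k-1)$ commutes with each generator of $G(n,k+r,k-1)$, so the two subgroups themselves commute elementwise.

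The homomorphism $\Theta(n,k)$ is then produced by the universal property of direct products of groups: the inclusions $G(n,k+r,k-1) \hookrightarrow \PS_n^+$ for $0 \leq r \leq n-k$ have pairwise commuting images by the previous step, so they assemble into a single homomorphism from the direct product that sends each $\alpha_{t,s}$ in the factor $G(n,t,k-1)$ to $\alpha_{t,s} \in \PS_n^+$. The point that deserves some care is the base case of the freeness induction, where one needs the kernel $F_{n-1}$ of Lemma \ref{lem:sequence-action} to be genuinely free on the prescribed generators; with that in hand, the remainder of the argument is an inspection of the McCool relations.
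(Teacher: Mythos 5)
Your proposal is correct and follows essentially the same route as the paper: freeness is obtained by locating $G(n,m,k-1)$ inside the free kernel of $\pi:\PS_m^+\to\PS_{m-1}^+$ from Lemma \ref{lem:sequence-action} (the paper invokes Lemma \ref{lem:inclusions} where you run an explicit induction on $n$), and commutativity is checked on generators via exactly the same two-case split into relations (2) and (3) of Theorem \ref{thm:McCool relations}, with $\Theta(n,k)$ then coming from the universal property of the direct product. Your write-up is in fact a little cleaner in its index bookkeeping than the paper's.
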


\begin{proof} Recall from \cite{cvwp} that there are split
epimorphisms $$\pi:\PS_n^+ \to \PS_{n-1}^+$$ with kernel given by a
free group on $(n-1)$-letters with basis
$$\alpha_{n,1},\alpha_{n,2}, \cdots, \alpha_{n,n-2}, \alpha_{n,n-1}.$$
Thus $G(n,n,n-1)$ is a free subgroup of $\PS_n^+$ for all $2 \leq
n$, and the first assertion that each $G(n,k+r,k-1)$ is free
follows.

Next, assume that
\begin{enumerate}
      \item $x \in G(n,k+s,k-1)$ for $0 \leq s \leq n-k$, and
      \item $y \in G(n,k+r,k-1)$ for $ 0 \leq s  < r \leq n - r$.
    \end{enumerate}

It suffices to check the next assertion that $xy = yx$ in case
$$x = \alpha_{k+s,u}, \ u \leq k-1,$$ and
$$y = \alpha_{k+r,t}, \ s \leq r \leq n-k, \ t \leq k-1.$$

Observe that either
\begin{enumerate}
  \item $s = t$ so that $[\alpha_{k+r,s},\alpha_{k,s}] = 1$ by Theorem \ref{thm:McCool relations}, or
  \item  $s \neq t$ in which case $[\alpha_{k+r,t},\alpha_{k,s}] =
  1$ by Theorem \ref{thm:McCool relations}
  as $s,t \leq k-1$ while the sets $\{k,s\}$ and $\{k+r,t\}$ are
  disjoint.
\end{enumerate}
\end{proof}

The next Lemma follows by inspection of the definitions.
\begin{lem}\label{lem:inclusions}
If  $2 \leq k \leq k+r \leq n-1$, then the groups $G(n,k+r,k-1)$ are
subgroups of the group $\PS_{n-1}^+ \subset \PS_{n}^+$, and
$$G(n,k+r,k-1) = G(n-1,k+r,k-1).$$
\end{lem}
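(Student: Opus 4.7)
The plan is to read off the conclusion directly from the structure of the cross-section $\sigma:\PS_{n-1}^+\to \PS_n^+$ recalled in Section \ref{sec:Properties of McCool's group}, using the specific index constraints in the hypotheses.

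First I would recall that, by definition, $G(n,k+r,k-1)$ is the subgroup of $\PS_n^+$ generated by the elements
\[
\alpha_{k+r,1},\ \alpha_{k+r,2},\ \ldots,\ \alpha_{k+r,k-1}.
\]
Under the assumptions $2 \leq k \leq k+r \leq n-1$, every generator $\alpha_{k+r,s}$ with $1 \leq s \leq k-1$ has both of its indices at most $n-1$: the first index satisfies $k+r \leq n-1$, and the second satisfies $s \leq k-1 \leq n-2$. In particular none of these generators is of the form $\alpha_{n,\cdot}$ or $\alpha_{\cdot,n}$, so each lies in the image of the cross-section $\sigma:\PS_{n-1}^+\to \PS_n^+$, which sends $\alpha_{k,i}$ to $\alpha_{k,i}$ (for $k,i<n$).

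Next I would observe that $\sigma$ is injective (being a section of $\pi$), so it identifies $\PS_{n-1}^+$ with the subgroup $\sigma(\PS_{n-1}^+)\subset \PS_n^+$ of which we are taking subgroups. Under this identification, the generators $\alpha_{k+r,s}$ of $G(n,k+r,k-1)$ regarded in $\PS_n^+$ are literally the same symbols as the generators of $G(n-1,k+r,k-1)$ regarded in $\PS_{n-1}^+$. Hence
\[
G(n,k+r,k-1)=\sigma\bigl(G(n-1,k+r,k-1)\bigr),
\]
and both assertions of the lemma follow: the group sits inside $\PS_{n-1}^+\subset \PS_n^+$, and under the natural identification it coincides with $G(n-1,k+r,k-1)$.

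There is really no obstacle here; the content of the lemma is just that the hypothesis $k+r\leq n-1$ prevents the index $n$ from appearing in any generator, so that the subgroup is already ``defined one level down.'' The only mild point requiring care is to keep track of the direction in which $\sigma$ is used, but since $\sigma$ is a genuine group homomorphism compatible with the indexing of generators, no further calculation is needed.
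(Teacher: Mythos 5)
Your proposal is correct and matches the paper's approach: the paper simply states that the lemma ``follows by inspection of the definitions,'' and your argument is precisely that inspection written out, using the hypothesis $k+r\leq n-1$ to see that no generator involves the index $n$ and the cross-section $\sigma$ to identify $\PS_{n-1}^+$ with its image in $\PS_n^+$. No gaps.
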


Next, consider the projection maps $$p: H(n,k) \to G(n,k,k-1)\times
G(n,k+1,k-1)\times \cdots \times G(n,n-1,k-1)$$ which deletes the
coordinate in $G(n,n,k-1)$. Observe that $G(n,k+1,k-1)\times \cdots
\times G(n,n-1,k-1)$ is equal to $H(n-1,k-1) \subset \PS_{n-1}^+.$
Furthermore, the maps
$$\Theta(n,k): G(n,k,k-1)\times G(n,k+1,k-1)\times \cdots \times
G(n,n,k-1) \to \PS_n^+$$ are compatible with the projection maps of
$$\pi:\PS_n^+ \to \PS_{n-1}^+$$ of \cite{cvwp} in the following
sense.

\begin{lem}\label{lem:Projection.maps}

If $ 2 \leq k <n$, the group $\Gamma^s(H(n,k))$ is isomorphic to the
direct product $$\Gamma^s(G(n,k,k-1))\times
\Gamma^s(G(n,k+1,k-1))\times \cdots \times \Gamma^s(G(n,n,k-1)).$$

There is a morphism of group extensions
\[
\begin{CD}
G(n,n,k-1) @>{\Theta(n,k)}>> F_{n-1}     \\
  @V{i}VV                    @VV{i}V \\
H(n,k) @>{\Theta(n,k)}>> P\Sigma_{n}^+    \\
  @V{p}VV                    @VV{\pi}V \\
H(n-1,k) @>{\Theta(n-1,k-1)}>> P\Sigma_{n-1}^+ .
\end{CD}
\]

Furthermore, there are induced maps of the level of the $s$-th stage
of the descending central series
\[
\begin{CD}
\Gamma^s(G(n,n,k-1)) @>{\Theta(n,k)}>> \Gamma^s(F_{n-1})     \\
  @V{i}VV                    @VV{i}V \\
\Gamma^s(H(n,k)) @>{\Theta(n,k)}>> \Gamma^s(P\Sigma_{n}^+)   \\
  @V{p}VV                    @VV{\pi}V \\
\Gamma^s(H(n-1,k)) @>{\Theta(n-1,k-1)}>> \Gamma^s(P\Sigma_{n-1}^+),
\end{CD}
\] and the vertical columns are group extensions.

\end{lem}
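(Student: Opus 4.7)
The plan breaks into three essentially formal steps.

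First, the identification $\Gamma^s(H(n,k)) \cong \prod_{r=0}^{n-k} \Gamma^s(G(n,k+r,k-1))$ follows purely from the direct-product structure of $H(n,k)$. In a direct product $A \times B$ the commutator of any element of $A$ with any element of $B$ is trivial, so every iterated length-$s$ commutator decomposes coordinate-wise, giving $\Gamma^s(A \times B) = \Gamma^s(A) \times \Gamma^s(B)$. Iterating this over the $n-k+1$ factors of $H(n,k)$ yields the claim.

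Second, the commutative diagram of group extensions is checked generator by generator. The left column is a direct-product extension: $p$ projects off the $G(n,n,k-1)$ factor, and Lemma \ref{lem:inclusions} identifies $G(n,k+r,k-1)$ with $G(n-1,k+r,k-1)$ for $k+r \leq n-1$, so the image of $p$ agrees with the group labeled $H(n-1,k)$ in the diagram. The right column is Lemma \ref{lem:sequence-action}. For commutativity, note that $\Theta(n,k)$ sends each free generator $\alpha_{t,s}$ to the element $\alpha_{t,s} \in \PS_n^+$, while $p$ kills exactly the factor $G(n,n,k-1)$ and $\pi$ kills exactly the Magnus generators $\alpha_{n,j}$ with top index $n$; these two descriptions match generator by generator. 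The top square factors through $F_{n-1} \hookrightarrow \PS_n^+$ because $\Theta(n,k)$ sends $G(n,n,k-1)$ into the free subgroup generated by $\alpha_{n,1}, \ldots, \alpha_{n,n-1}$, which is precisely the kernel of $\pi$.

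Third, I pass to the $s$-th stage of the descending central series. For the right column, Lemma \ref{lem:sequence-action} verifies the hypotheses of Theorem \ref{thm:Falk-Randell}: the extension is split, and the induced action on $H_1(F_{n-1})$ is trivial. Theorem \ref{thm:Falk-Randell} then delivers the split short exact sequence $1 \to \Gamma^s(F_{n-1}) \to \Gamma^s(\PS_n^+) \to \Gamma^s(\PS_{n-1}^+) \to 1$. For the left column, the direct-product structure of step one makes the corresponding $\Gamma^s$-sequence a direct-product extension automatically, hence split exact. Functoriality of $\Gamma^s$ applied to the commutative diagram of step two yields the commutativity of the induced $\Gamma^s$-diagram, completing the lemma.

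The only obstacle I anticipate is clerical: one must reconcile the indexing conventions for $\Theta(n-1,\cdot)$ with the group $H(n-1,k)$ through the identifications of Lemma \ref{lem:inclusions}, so that each $G(n,k+r,k-1)$ appearing as a factor of $H(n,k)$ is genuinely the same subgroup as the $G(n-1,k+r,k-1) \subset \PS_{n-1}^+$ appearing after projection. Beyond this bookkeeping there is no substantive difficulty, since the one non-formal input, Theorem \ref{thm:Falk-Randell}, is invoked only in the $\PS$-column, where its hypotheses have already been verified.
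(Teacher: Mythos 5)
Your proposal is correct and follows essentially the same route as the paper's own (much terser) proof: the product decomposition of $\Gamma^s(H(n,k))$ from the direct-product structure, commutativity of the diagram from the definitions of $p$, $\pi$, and $\Theta$ on generators, and the passage to $\Gamma^s$ via naturality together with the Falk--Randell theorem applied to the $\PS$-column. Your closing remark about reconciling the indexing of $\Theta(n-1,\cdot)$ with $H(n-1,k)$ through Lemma \ref{lem:inclusions} is a fair observation about a genuine notational wrinkle in the statement, but it does not affect the substance of the argument.
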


\begin{proof} The first assertion concerning the product decomposition of
$\Gamma^s(H(n,k))$ follows from the fact that $H(n,k)$ is a product. That the first diagram commutes follows from the definition of the
map $p:H(n,k) \to H(n-1,k)$.

The third assertion concerning the group extensions as well as
stages of the descending central series follows by naturality for the $H(n,k)$ and by the Falk-Randell theorem, stated here as Proposition \ref{lem:exact
sequence}.
\end{proof}

Since $H(n,k)$ is a direct product of $(n-k)$ free groups, each of
which have $(k-1)$ generators, the next corollary follows at once.

\begin{cor}\label{cor:Lie.algebra.for.H}
If $ 2 \leq k \leq n$, the Lie algebra $gr_*^{DCS}H(n,k)$ is
isomorphic to the direct sum of Lie algebras $$\bigoplus_{k \leq m
\leq n} gr_*^{DCS}G(n,m,k-1) \cong \bigoplus_{n-k}L[V_{k-1}]$$ with
generators for the $m$-th summand represented by
$$\alpha_{m,1},\alpha_{m,2}, \cdots, \alpha_{m,k-1}$$ for all $n \geq m
\geq k$.
\end{cor}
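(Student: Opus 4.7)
The plan is to assemble three already-established ingredients: (i) $H(n,k)$ is a direct product, (ii) each factor $G(n,m,k-1)$ is a free group of rank $k-1$, and (iii) both the descending central series and its associated graded Lie algebra commute with finite direct products.

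More precisely, by Definition \ref{def:subgroups} we have
\[
H(n,k) \;=\; G(n,k,k-1)\times G(n,k+1,k-1)\times\cdots\times G(n,n,k-1).
\]
Lemma \ref{lem:commuting.elements} asserts that each factor $G(n,m,k-1)$ is a \emph{free} subgroup of $\PS_n^+$; since by Definition \ref{def:subgroups} it is generated by the $k-1$ elements $\alpha_{m,1},\ldots,\alpha_{m,k-1}$, it is free of rank exactly $k-1$. (Freeness combined with a generating set of size $k-1$ forces the rank to equal $k-1$, since the abelianization has rank at most $k-1$ and freeness of rank $<k-1$ would make any $k-1$ element generating set redundant — this is made explicit in the first assertion of Lemma \ref{lem:Projection.maps}.)

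Next, for a direct product of groups $A\times B$, one has $\Gamma^s(A\times B)=\Gamma^sA\times\Gamma^sB$ and hence $gr_*^{DCS}(A\times B)\cong gr_*^{DCS}(A)\oplus gr_*^{DCS}(B)$ as graded Lie algebras (brackets across distinct factors vanish since the factors commute). Iterating this elementary observation $n-k+1$ times and invoking the first assertion of Lemma \ref{lem:Projection.maps} (which records precisely this product decomposition on the level of $\Gamma^s H(n,k)$) gives
\[
gr_*^{DCS}H(n,k) \;\cong\; \bigoplus_{m=k}^{n} gr_*^{DCS}G(n,m,k-1)
\]
as Lie algebras.

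Finally, the Hall--Witt theorem recalled in Section \ref{sec:A review of some central series} identifies the associated graded Lie algebra of a free group of rank $k-1$ with the free Lie algebra $L[V_{k-1}]$; applying this to each summand and tracking that the generators of the $m$-th summand are the residues of $\alpha_{m,1},\ldots,\alpha_{m,k-1}$ completes the proof. There is no serious obstacle here: the corollary is essentially bookkeeping, and the only point one must be mindful of is that the indexing $\bigoplus_{n-k}L[V_{k-1}]$ in the statement refers to the number of free-Lie-algebra summands (one for each $m\in\{k,\ldots,n\}$) and that the identification respects the explicit choice of generators $\alpha_{m,j}$.
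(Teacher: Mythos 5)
Your proof is correct and follows essentially the same route as the paper, which simply observes that $H(n,k)$ is a direct product of free groups on $k-1$ generators and declares the corollary to ``follow at once'' via the Hall--Witt identification $gr_*^{DCS}(F_{k-1})\cong L[V_{k-1}]$. Your parenthetical about the indexing is well taken: the product $G(n,k,k-1)\times\cdots\times G(n,n,k-1)$ has $n-k+1$ factors, so the subscript $n-k$ in the displayed direct sum is off by one (an inconsistency already present in the paper, which elsewhere writes $\bigotimes_{n-k+1}$ for the corresponding cohomology).
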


\begin{lem}\label{lem:split monic}
If $ 2 \leq k \leq n$, the map
\[
\begin{CD}
\Gamma^s(H(n,k)) @>{\Theta(n,k)}>> \Gamma^s(P\Sigma_{n}^+)
\end{CD}
\] of Lemma \ref{lem:Projection.maps} induces a monomorphism of Lie algebras

\[
\begin{CD}
gr_*^{DCS}(H(n,k)) @>{gr_*(\Theta(n,k))}>> gr_*^{DCS}(P\Sigma_{n}^+)
\end{CD}
\] which is a split monomorphism of abelian groups.

\end{lem}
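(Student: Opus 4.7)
The plan is to induct on $n$ for each fixed $k$ (with $2\leq k\leq n$), using the two-column exact-sequence diagram from Lemma \ref{lem:Projection.maps} and the short-five-lemma for split sequences of abelian groups. Before starting, note that the Lie algebra part of the conclusion is automatic: $\Theta(n,k)$ is a homomorphism of groups, so the induced map on associated graded Lie algebras is a Lie algebra homomorphism. What genuinely requires proof is the abelian-group splitting of $gr_*(\Theta(n,k))$.

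The base case $n=k$ reduces directly to Falk--Randell. Here $H(k,k)=G(k,k,k-1)$ is the free group on $\alpha_{k,1},\ldots,\alpha_{k,k-1}$, which is precisely the kernel $F_{k-1}$ of $\pi\colon P\Sigma_k^+\to P\Sigma_{k-1}^+$ from Lemma \ref{lem:sequence-action}, and $\Theta(k,k)$ is the inclusion. Theorem \ref{thm:Falk-Randell} then provides a split short exact sequence on the graded level, giving the required split monomorphism.

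For the inductive step $n>k$, apply Theorem \ref{thm:Falk-Randell} to each column of the diagram in Lemma \ref{lem:Projection.maps}; this yields a commutative diagram of short exact sequences of abelian groups whose rows both split — the top because $H(n,k)$ is a direct product, and the bottom by Falk--Randell together with Lemma \ref{lem:sequence-action}. The inductive hypothesis (applied to the valid pair $(n-1,k)$ since $k\leq n-1$) gives that the right-hand vertical arrow is a split monomorphism. For the left-hand vertical arrow, the subgroup $G(n,n,k-1)\subset F_{n-1}$ generated by $\alpha_{n,1},\ldots,\alpha_{n,k-1}$ is a retract of $F_{n-1}$ via the homomorphism sending $\alpha_{n,j}\mapsto\alpha_{n,j}$ for $j\leq k-1$ and $\alpha_{n,j}\mapsto 1$ for $j>k-1$. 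Passing to associated graded Lie algebras preserves this retraction, so the left-hand arrow is also a split monomorphism.

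To combine, choose splittings $s_A,s_B$ of the top and bottom rows of the graded diagram, yielding direct sum decompositions $A_2=A_1\oplus s_A(A_3)$ and $B_2=B_1\oplus s_B(B_3)$ of the middle objects. With respect to these, the middle vertical arrow $f_2=gr_*(\Theta(n,k))$ takes the upper-triangular form
\[
f_2 = \begin{pmatrix} f_1 & g \\ 0 & f_3 \end{pmatrix}
\]
for a correction map $g\colon A_3\to B_1$ measuring the incompatibility of the two chosen splittings; the lower-left entry vanishes by commutativity of the diagram. Given retractions $r_1$ of $f_1$ and $r_3$ of $f_3$ supplied by the outer vertical arrows, one checks directly that
\[
r_2 = \begin{pmatrix} r_1 & -r_1 g r_3 \\ 0 & r_3 \end{pmatrix}
\]
satisfies $r_2 f_2 = \mathrm{Id}$, producing the desired retraction of $f_2$. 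The only subtle point in the argument is verifying the Falk--Randell hypothesis for the outer column so that its associated graded sequence is split as abelian groups, and this is exactly the content of Lemma \ref{lem:sequence-action}; the remainder is diagram chase and the product structure of $H(n,k)$.
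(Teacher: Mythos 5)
Your proposal is correct and follows essentially the same route as the paper: both reduce the statement to the fact that $G(n,n,k-1)\hookrightarrow F_{n-1}$ is a free-factor inclusion (hence split on the level of graded Lie algebras) and rely on Falk--Randell, via Lemma \ref{lem:sequence-action}, to split $gr_*^{DCS}(P\Sigma_n^+)$ along the tower $\pi: P\Sigma_n^+ \to P\Sigma_{n-1}^+$. Your induction with the explicit upper-triangular retraction merely makes rigorous the paper's brief assertion that the map is a direct sum of split monomorphisms.
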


\begin{proof} Observe that $gr_s^{DCS}(P\Sigma_{n}^+)$ was computed in
Proposition \ref{exact-sequences}.  Corollary \ref{cor:Lie.algebra.for.H} states that
if $ 2 \leq k \leq n$, the Lie algebra $gr_*^{DCS}H(n,k)$ is
isomorphic to the direct sum of Lie algebras $\bigoplus_{k \leq m \leq n} \ gr_*^{DCS}G(n,m,k-1) \cong \bigoplus_{n-k}L[V_{k-1}]$
with generators for the $m$-th summand
represented by
$$\alpha_{m,1},\alpha_{m,2}, \cdots, \alpha_{m,k-1}$$ for all $m
\geq k$.

Since the map $\Theta(n,k):H(n,k) \to \PS_n^+$ restricts to a map
$$\Theta(n,k):G(n,k+r,k-1) \to \PS_{k+r}^+$$ for every $ 0 \leq r
\leq n-k$, it suffices to check that the induced map $G(n,n,k-1) \to
F_{n-1}$, where $F_{n-1}$ is the kernel of $\pi:\PS_n^+ \to
\PS_{n-1}^+$, induces a split monomorphism on the level of Lie
algebras.

Note that $G(n,n,k-1)$ is the subgroup of $\PS_n^+$ generated by the
elements $$\alpha_{n,1},\alpha_{n,2}, \cdots, \alpha_{n,k-1}.$$ Thus
the inclusion $G(n,n,k-1) \to F_{n-1}$ is a split monomorphism on
the level of free groups, and hence on the level of Lie algebras.
Thus, the induced map $$gr_s^{DCS}H(n,k)\to
gr_s^{DCS}(P\Sigma_{n}^+)$$ is a split monomorphism as it is a
direct sum of maps which are monomorphisms of Lie algebras each of
which is split as abelian groups.
\end{proof}

The statement and proof of the next standard fact are recorded for
the convenience of the reader.
\begin{lem}\label{lem:observation}
Let $\Gamma^s(F_q)$ denote the $s$-th stage of the descending
central series for the free group $F_q$. Then $$\mathbb Z \oplus
L_s[V_q]^*$$ is a direct summand of the cohomology of
$\Gamma^s(F_q)$.

\end{lem}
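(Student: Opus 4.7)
The plan is to use the fact that $\Gamma^s(F_q)$, being a subgroup of the free group $F_q$, is itself free (of possibly infinite rank by Nielsen--Schreier), so its cohomology is concentrated in degrees $0$ and $1$ with $H^0(\Gamma^s(F_q)) = \mathbb Z$ and $H^1(\Gamma^s(F_q)) = \Hom(\Gamma^s(F_q)^{ab}, \mathbb Z)$. The task then reduces to exhibiting $L_s[V_q]^*$ as a direct summand of $H^1(\Gamma^s(F_q))$.

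The key step is to produce a natural surjection $\Gamma^s(F_q)^{ab} \twoheadrightarrow L_s[V_q]$. This comes from the projection $\Gamma^s(F_q) \to \Gamma^s(F_q)/\Gamma^{s+1}(F_q) = L_s[V_q]$ (by the Hall--Witt theorem recalled earlier in the paper), which factors through the abelianization because $L_s[V_q]$ is abelian. Surjectivity is automatic. Moreover, since $[\Gamma^s(F_q), \Gamma^s(F_q)] \subset \Gamma^{2s}(F_q) \subset \Gamma^{s+1}(F_q)$ for $s \geq 1$, the map is well-defined, as needed.

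Now $L_s[V_q]$ is a finitely generated free abelian group (again by the Hall--Witt formulas, as recalled in Section \ref{sec:introduction}), so the surjection $\Gamma^s(F_q)^{ab} \twoheadrightarrow L_s[V_q]$ admits a splitting of abelian groups. Applying $\Hom(-, \mathbb Z)$ yields a split injection $L_s[V_q]^* \hookrightarrow \Hom(\Gamma^s(F_q)^{ab}, \mathbb Z) = H^1(\Gamma^s(F_q))$, exhibiting $L_s[V_q]^*$ as a direct summand of $H^1$. Adding the degree-zero summand $\mathbb Z = H^0(\Gamma^s(F_q))$ gives $\mathbb Z \oplus L_s[V_q]^*$ as a direct summand of $H^*(\Gamma^s(F_q))$, completing the proof.

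There is no real obstacle here; the statement is essentially a bookkeeping consequence of two standard facts, namely that subgroups of free groups are free and that the graded quotient $\Gamma^s(F_q)/\Gamma^{s+1}(F_q)$ is the free-abelian group $L_s[V_q]$. The only thing to be careful about is to keep track of the fact that $\Gamma^s(F_q)$ may well be infinitely generated while the target $L_s[V_q]$ of its abelianization quotient is finitely generated free abelian, which is exactly what makes the splitting exist.
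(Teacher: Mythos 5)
Your proof is correct and follows essentially the same route as the paper: both exploit the quotient map $\Gamma^s(F_q) \twoheadrightarrow \Gamma^s(F_q)/\Gamma^{s+1}(F_q) = L_s[V_q]$ and the fact that $L_s[V_q]$ is finitely generated free abelian to produce a splitting, then dualize to get $L_s[V_q]^*$ as a summand of $H^1$. The only cosmetic difference is that the paper realizes the splitting by lifting a basis of $L_s[V_q]$ to a map from a free group $F \to \Gamma^s(F_q)$, whereas you split the surjection of abelianizations directly using projectivity of $L_s[V_q]$; your appeal to Nielsen--Schreier is harmless but not actually needed.
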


\begin{proof}
Recall that the Lie algebra attached to the descending central
series of $F_q$ is the free Lie algebra $L[V_q]$ with the $s$-th
graded direct summand given by
$$L_s[V_q] = \Gamma^s(F_q)/
\Gamma^{s+1}(F_q).$$ Thus there is a group extension
\[
\begin{CD}
1  @>{}>> \Gamma^{s+1}(F_q)@>{}>> \Gamma^{s}(F_q)@>{}>> L_s[V_q] @>{}>> 1. \\
\end{CD}
\]

Since $L_s[V_q]$ is a finitely generated free abelian group, it has
a basis over the integers. That basis depends on both $s$ and $q$.
Thus let $\mathbb S(s,q)$ denote a set which indexes this basis. Fix
a choice of basis $b_{\alpha}$, $\alpha \in \mathbb S(s,q)$, and let
$F$ denote a free group with this choice of basis.

There is a choice of lift of $b_{\alpha}$ to $\gamma_{\alpha} \in
\Gamma^{s}(F_q)$ for each $\alpha \in \mathbb S(s,q)$. Thus there is
an induced homomorphism $$\Theta: F \to \Gamma^{s}(F_q)$$ with the
property that the composite
\[
\begin{CD}
F @>{\Theta}>> \Gamma^{s}(F_q)@>{}>> L_s[V_q] \\
\end{CD}
\] is an epimorphism.

\

Thus this composite map induces an isomorphism
\[
\begin{CD}
H_1(F)  @>{}>> H_1(L_s[V_q]) =
L_s[V_q] \\
\end{CD}
\] as well as an isomorphism

\[
\begin{CD}
H^1(L_s[V_q]) = L_s[V_q]^* @>{}>> H^1(F). \\
\end{CD}
\]

It follows that $H^1(L_s[V_q]) = L_s[V]^*$ injects in
$H^1(\Gamma^{s}(F_q))$ by inspection. Furthermore, this injection is
split by the map induced in cohomology from $\Theta: F \to
\Gamma^{s}(F_q)$. Thus this gives a direct summand.
\end{proof}

\begin{remark}\label{rem:homology.of.quotients}
\

\begin{enumerate}
  \item The cohomology of $F_q/\Gamma^s(F_q)$ is not yet well understood
  for large $q$. One classical result of Hopf's theorem about the
  second homology group of a discrete group is an isomorphism
  $$H_2(F_q/\Gamma^s(F_q)) \cong L_{s+1}[V_q].$$
  \item One feature concerning Lemma \ref{lem:observation} is developed next. The group
$\Gamma^s(F_q)$ is a free group. However, control of the generators
or even the first homology group is tenuous (as seen from the
formulae of Witt for the ranks). Indeed, the first homology group is
generally much larger than the group $L_s[V]$. However, this last
group $L_s[V]$  is a direct summand of $H_1(\Gamma^s(F_q))$, which
allows easy manipulation in this context and forces the rapid growth
of the cohomology of $J_n^s$.
\end{enumerate}

\end{remark}

The utility of Lemma \ref{lem:observation} is as follows.

\begin{cor}\label{cor:cohomology.of.Lie.algebra.for.H}
Assume that $ 2 \leq k \leq n$.

\begin{enumerate}
  \item The integral cohomology algebra of $H(n,k)$ is isomorphic to
 $$ \bigotimes_{n-k} (\mathbb Z \oplus L_1[V_{k-1}]^*) = \bigotimes_{n-k} (\mathbb Z \oplus V_{k-1}^*).$$
  \item  The integral cohomology algebra of $\Gamma^sH(n,k)$
  contains a subalgebra which is isomorphic
 $$ \bigotimes_{n-k} (\mathbb Z \oplus L_s[V_{k-1}]^*),$$ which is the image of the natural map
 $$H^*(gr_s^{DCS}H(n,k)) \to H^*(\Gamma^sH(n,k)).$$

\end{enumerate}
\end{cor}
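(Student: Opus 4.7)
The plan is to exploit the product structure of $H(n,k)$ throughout and reduce everything to Künneth plus the single-factor statement of Lemma \ref{lem:observation}.

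For part (1), recall from Definition \ref{def:subgroups} that $H(n,k)$ is the direct product of the groups $G(n,m,k-1)$ for $k \leq m \leq n$, and by Lemma \ref{lem:commuting.elements} each such $G(n,m,k-1)$ is a free group on the $k-1$ generators $\alpha_{m,1},\ldots,\alpha_{m,k-1}$, hence isomorphic to $F_{k-1}$. The cohomology of a free group on $k-1$ generators is concentrated in degrees $0$ and $1$, with $H^0 = \mathbb{Z}$ and $H^1 = V_{k-1}^{\ast} = L_1[V_{k-1}]^{\ast}$. The Künneth formula (applicable because these cohomology groups are free abelian) identifies
\[
H^{\ast}(H(n,k)) \;\cong\; \bigotimes_{m=k}^{n} H^{\ast}(G(n,m,k-1)) \;\cong\; \bigotimes_{n-k}\bigl(\mathbb{Z} \oplus L_1[V_{k-1}]^{\ast}\bigr),
\]
which is the asserted isomorphism (using the convention of Corollary \ref{cor:Lie.algebra.for.H}).

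For part (2), the descending central series distributes over direct products, so
\[
\Gamma^s H(n,k) \;=\; \prod_{m=k}^{n} \Gamma^s G(n,m,k-1),
\]
and the associated graded breaks up accordingly as already recorded in Corollary \ref{cor:Lie.algebra.for.H}: $gr_s^{DCS}H(n,k) = \bigoplus_{m=k}^{n} L_s[V_{k-1}]$. Under these identifications, the natural homomorphism $H^{\ast}(gr_s^{DCS}H(n,k)) \to H^{\ast}(\Gamma^s H(n,k))$ is the tensor product, one factor for each $m$, of the maps
\[
\Lambda^{\ast}\bigl(L_s[V_{k-1}]^{\ast}\bigr) \;=\; H^{\ast}(L_s[V_{k-1}]) \;\longrightarrow\; H^{\ast}(\Gamma^s G(n,m,k-1))
\]
induced by the quotient $\Gamma^s G(n,m,k-1) \twoheadrightarrow L_s[V_{k-1}]$.

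The key observation that pins down the image of each factor is that $\Gamma^s G(n,m,k-1)$ is a subgroup of a free group and is therefore itself free, hence has cohomological dimension at most $1$. Consequently the map from $\Lambda^{\ast}(L_s[V_{k-1}]^{\ast})$ can contribute only in degrees $0$ and $1$; in degree $0$ the map is the obvious isomorphism $\mathbb{Z} \to \mathbb{Z}$, and in degree $1$ Lemma \ref{lem:observation} ensures that $L_s[V_{k-1}]^{\ast}$ injects as a direct summand of $H^1(\Gamma^s G(n,m,k-1))$. Hence the image of each factor map is precisely the split direct summand $\mathbb{Z} \oplus L_s[V_{k-1}]^{\ast}$. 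Tensoring across $m$ via Künneth yields the claimed subalgebra
\[
\bigotimes_{n-k}\bigl(\mathbb{Z} \oplus L_s[V_{k-1}]^{\ast}\bigr) \;\subset\; H^{\ast}(\Gamma^s H(n,k)),
\]
realized precisely as the image of $H^{\ast}(gr_s^{DCS}H(n,k))$, with its algebra structure preserved because each factor inclusion is a split monomorphism of rings in degrees $\leq 1$. The only mild obstacle is verifying that the image is exactly this tensor product and no larger; the cohomological-dimension-one argument for each free factor is precisely what truncates the exterior algebra on $L_s[V_{k-1}]^{\ast}$ to its $(\mathbb{Z} \oplus L_s[V_{k-1}]^{\ast})$-part and closes the argument.
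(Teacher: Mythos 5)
Your argument is correct and is essentially the proof the paper intends (the paper states this corollary as an immediate consequence of Lemma \ref{lem:observation} together with the product decomposition of $H(n,k)$ and K\"unneth); your additional observation that $\Gamma^s G(n,m,k-1)$ is free of cohomological dimension one, which truncates each exterior algebra factor $\Lambda^*(L_s[V_{k-1}]^*)$ to $\mathbb{Z}\oplus L_s[V_{k-1}]^*$, is exactly the point needed to identify the image precisely. Note only that the number of factors is $n-k+1$ (one for each $m$ with $k\le m\le n$), a discrepancy present in the paper itself between this corollary and Section \ref{sec:The last step}, and which you correctly flag.
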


\section{Values of the Johnson homomorphism on certain subgroups}\label{sec:The values of the Johnson homomorphism on certain subgroups}

The purpose of this section is to derive the values of the Johnson
homomorphism for certain subgroups of $\PS_n^+$. Recall from Section
\ref{sec:Properties of McCool's group} that $\PS_n^+$ is generated
by the subset of Magnus generators
$$ \{ \alpha_{ij} \ | \ 1 \leq j < i \leq n \}.$$ For ease of
notation, fix $q$ and write $$w_r= \alpha_{qr}$$ for $q>r$. A fact
observed in \cite{cvwp} is that the elements $w_r= \alpha_{qr}$ for
$1 \leq r \leq q-1$ give a basis for a free group in $\PS_n^+$.

\

Throughout the remainder of this section, it will be tacitly assumed
that $$1 \leq r \leq q-1.$$ The formulae are verified as follows.
First consider the action of $$w_r= \alpha_{qr}$$ for $q>r$ on
$x_t$:
\[
w_r(x_t) =
\begin{cases}
x_t & \text{if $t \neq q$,}\\
x_rx_qx_r^{-1}
 & \text{if $t=q$.}
\end{cases}
\]

Thus for example $$1 \leq r_i \leq q-1,$$

\[
(w_{r_1}w_{r_2})(x_t) = w_{r_1}((w_{r_2}(x_t)) =
\begin{cases}
w_{r_1}(x_t) = x_t & \text{if $t \neq q$,}\\
w_{r_1}( x_{r_2}x_qx_{r_2}^{-1}) = x_{r_1}
(x_{r_2}x_qx_{r_2}^{-1})x_{r_1}^{-1}
 & \text{if $t=q$.}
\end{cases}
\]

Next, consider a product given by
$$\mathcal W = w_{r_1}^{\epsilon_1}\cdot
w_{r_2}^{\epsilon_2}\cdots w_{r_m}^{\epsilon_m}$$ for $\epsilon_i =
\pm 1$ for $1 \leq r_i \leq q-1.$ We begin to record some formulae
essential to our computations in the sequel.

\begin{lem}\label{lem:action.of.W}

If $$\mathcal W = w_{r_1}^{\epsilon_1}\cdot
w_{r_2}^{\epsilon_2}\cdots w_{r_m}^{\epsilon_m}$$ for $\epsilon_i =
\pm 1$ for $1 \leq r_i \leq q-1$, the action of $\mathcal W$ is
specified by the formula

\[
\mathcal W(x_t) =
\begin{cases}
x_t & \text{if $t \neq q$,}\\
(x_{r_1}^{\epsilon_1}\cdot x_{r_2}^{\epsilon_2}\cdots
x_{r_m}^{\epsilon_m})\cdot x_q \cdot (x_{r_1}^{\epsilon_1}\cdot
x_{r_2}^{\epsilon_2}\cdots x_{r_m}^{\epsilon_m})^{-1} & \text{if
$t=q$.}
\end{cases}
\]

\

\end{lem}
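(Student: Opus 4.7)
The natural approach is induction on $m$, the length of the word $\mathcal W$, with the single-letter case handled directly from the definition. For $m = 1$ and $\epsilon_1 = +1$ the formula is just the defining formula for $\alpha_{q r_1}$ given at the start of Section~\ref{sec:Properties of McCool's group}; for $\epsilon_1 = -1$, a direct calculation shows that the automorphism sending $x_q$ to $x_{r_1}^{-1} x_q x_{r_1}$ and fixing every other generator composes with $w_{r_1}$ to the identity, which settles the base case.

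For the inductive step, factor $\mathcal W = \mathcal W' \cdot w_{r_m}^{\epsilon_m}$ with $\mathcal W' = w_{r_1}^{\epsilon_1} \cdots w_{r_{m-1}}^{\epsilon_{m-1}}$, and set $U = x_{r_1}^{\epsilon_1} \cdots x_{r_{m-1}}^{\epsilon_{m-1}}$. The inductive hypothesis supplies $\mathcal W'(x_q) = U x_q U^{-1}$ and $\mathcal W'(x_t) = x_t$ for $t \neq q$. The crucial observation is that each generator $w_{r_j}^{\epsilon_j} = \alpha_{q r_j}^{\epsilon_j}$ fixes every $x_s$ with $s \neq q$, and the assumption $1 \leq r_j \leq q-1$ forces every subscript occurring in $U$ to be different from $q$; hence $w_{r_m}^{\epsilon_m}(U) = U$. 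Composing and using that $w_{r_m}^{\epsilon_m}$ is a group homomorphism,
\[ \mathcal W(x_q) = w_{r_m}^{\epsilon_m}\bigl(\mathcal W'(x_q)\bigr) = U \cdot w_{r_m}^{\epsilon_m}(x_q) \cdot U^{-1} = \bigl(U x_{r_m}^{\epsilon_m}\bigr) x_q \bigl(U x_{r_m}^{\epsilon_m}\bigr)^{-1}, \]
which is exactly the claimed formula. The corresponding assertion that $\mathcal W(x_t) = x_t$ for $t \neq q$ is immediate since both $\mathcal W'$ and $w_{r_m}^{\epsilon_m}$ individually fix such $x_t$.

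Conceptually, the lemma asserts that the map $\Phi$ which replaces each $w_{r_i}^{\epsilon_i}$ by $x_{r_i}^{\epsilon_i}$ descends to a well-defined homomorphism from $G(n,q,q-1)$ to $F_{q-1} \subset F_n$, and that $\mathcal W$ acts on $x_q$ as conjugation by $\Phi(\mathcal W)$; this is the cleanest packaging of the result. The only point requiring care is the order-of-composition convention, which determines whether the substituted word accumulates on the left or on the right of the conjugator. Beyond this bookkeeping, no real obstacle arises: the lemma is a formal manipulation with Magnus generators, and the restriction $r_j \leq q-1$ is precisely what places every $x_{r_j}$ in the pointwise fixed set of each $w_{r_m}^{\epsilon_m}$, which is what makes the induction collapse cleanly.
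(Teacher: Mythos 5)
Your proof is correct and follows essentially the same route as the paper, which verifies the single-generator action, displays the two-letter case, and lets the general formula follow by the same concatenation of conjugators; your induction on $m$ simply makes this explicit, with the key point in both arguments being that the hypothesis $r_j \leq q-1$ forces each $w_{r_m}^{\epsilon_m}$ to fix every letter of the accumulated conjugating word. Your remark about the order-of-composition convention is well taken: the stated formula (conjugator $x_{r_1}^{\epsilon_1}\cdots x_{r_m}^{\epsilon_m}$ in the same order as the word $\mathcal W$) is the one produced by composing in diagrammatic order, exactly as in your inductive step.
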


Thus for example, the action of the commutator $$[w_{r_1},w_{r_2}] =
w_{r_1}^{-1}w_{r_2}^{-1}w_{r_1}w_{r_2}$$ on $x_t$ is specified by

\[
[w_{r_1},w_{r_2}](x_t) =
\begin{cases}
x_t & \text{if $t \neq q$,}\\
[x_{r_1},x_{r_2}]\cdot x_q \cdot [x_{r_1},x_{r_2}]^{-1} & \text{if
$t=q$.}
\end{cases}
\]
The formula for the action of the commutator $$\Lambda = [\cdots
[w_{r_1}, w_{r_2}] \cdots ] w_{r_{m}}] \in IA_n$$ on $x_t$ for $1
\leq r_i \leq q-1$ is thus given by the formula

\[
\Lambda(x_t) =
\begin{cases}
x_t & \text{if $t \neq q$,}\\
\Lambda_x \cdot x_q \cdot \Lambda_x^{-1} & \text{if $t=q$}
\end{cases}
\] where $$\Lambda_x = [\cdots [x_{r_1}, x_{r_2}] \cdots ] x_{r_{m}}],$$ the commutator formally obtained by replacing
each $w_{r_i}$ by $x_{r_i}$ in the commutator $\Lambda$.

\

Values resulting from applying the Johnson homomorphism are recorded
next.
\begin{prop}\label{thm:johnson.on.upper.triangular.mccool}
Consider the commutator $\Lambda = [\cdots [w_{r_1}, w_{r_2}],
\cdots ], w_{r_{t}}]$.  If  $ r_1, r_2, \ldots, r_{t} < q$, then

\[
\tau_s(\Lambda)(x_t) =
\begin{cases}
x_t & \text{if $t \neq q$,}\\
\Lambda_x \cdot x_q \cdot \Lambda_x^{-1}\cdot x_{q}^{-1} =
[\Lambda_x^{-1},{x_q}^{-1}]
 & \text{if $t=q$.}
\end{cases}
\] \qed
\end{prop}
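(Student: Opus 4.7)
The proposition follows by combining the action formulas assembled in this section with the definition $\tau_s(\phi)(x) = \phi(\tilde{x})\tilde{x}^{-1}$ from Section \ref{sec:A review of some central series}. Throughout, I refer to the commutator depth of $\Lambda$ as $t$ (as in the statement); the symbol $t$ appearing as the subscript of the input $x_t$ in the case analysis is a separate dummy index, which I rename $\ell$ to avoid the overload.

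For $\ell \neq q$: the defining formula for $w_r = \alpha_{q,r}$ recalled just before Lemma \ref{lem:action.of.W} shows that each $w_{r_i}$ fixes $x_\ell$. Consequently any word, and in particular the iterated commutator $\Lambda$, in the $w_{r_i}$'s fixes $x_\ell$. Plugging into the definition of $\tau_s$ yields the trivial value, matching the claim.

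For $\ell = q$: I would induct on $t$, with base case $t=2$ given directly by the formula for $[w_{r_1}, w_{r_2}](x_q)$ displayed just after Lemma \ref{lem:action.of.W}. For the inductive step, write $\Lambda = [\Phi, w_{r_t}]$ with $\Phi$ of depth $t-1$, and apply the inductive hypothesis $\Phi(x_q) = \Phi_x\, x_q\, \Phi_x^{-1}$, where $\Phi_x = [\cdots[x_{r_1},x_{r_2}],\cdots],x_{r_{t-1}}] \in \Gamma^{t-1}(F_n)$. Since every $w_r$ fixes $x_{r_t}$ and $\Phi_x$ is a word in the $x_{r_i}$ with $r_i < q$, the expansion
\[
\Lambda(x_q) = \Phi^{-1} w_{r_t}^{-1} \Phi w_{r_t}(x_q)
\]
collapses, after substitution and reduction modulo $\Gamma^{t+2}(F_n)$, to $\Lambda_x\, x_q\, \Lambda_x^{-1}$. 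Substituting into the definition of $\tau_s$ and using the commutator convention $[a,b] = a^{-1}b^{-1}ab$ fixed in Section \ref{sec:A review of some central series} then gives
\[
\tau_s(\Lambda)(x_q) = \Lambda_x\, x_q\, \Lambda_x^{-1}\, x_q^{-1} = [\Lambda_x^{-1}, x_q^{-1}],
\]
as claimed.

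The main obstacle will be the bookkeeping in the inductive step: various group-theoretic manipulations produce elements of $F_n$ that differ from $\Lambda_x$ by commutators of greater depth, so one must consistently work modulo $\Gamma^{t+2}(F_n)$. The saving grace is that $\tau_s$ takes values in the quotient $\Gamma^{s+1}F_n/\Gamma^{s+2}F_n \cong L_{s+1}[V_n]$, in which those higher-depth corrections vanish; hence the precise representative chosen for $\Lambda_x$ within its coset is immaterial, and the formula in the proposition is well-defined.
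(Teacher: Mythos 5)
Your argument is correct and is essentially the paper's: the proposition carries no separate proof precisely because it follows at once from the displayed formula $\Lambda(x_q)=\Lambda_x\cdot x_q\cdot\Lambda_x^{-1}$ together with the definition $\tau_s(\phi)(x)=\phi(\tilde{x})\tilde{x}^{-1}$, which is exactly your final step. The only divergence is that you re-derive that formula by induction on commutator depth, whereas the paper obtains it in one stroke by specializing Lemma \ref{lem:action.of.W} (a commutator is just a word in the $w_{r_i}^{\pm 1}$, so it acts on $x_q$ by conjugation by the corresponding word $\Lambda_x$ in the $x_{r_i}^{\pm 1}$); in particular the conjugation identity holds exactly in $F_n$, so your reduction modulo $\Gamma^{t+2}(F_n)$, while harmless, is not actually needed.
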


The next statement records implications of these formulae on the
level of Lie algebras.
\begin{cor}\label{inclusion}
The composite morphism of Lie algebras denoted

$$J:\bigoplus_{s \geq 1} gr_s^{DCS}(\PS_n^+) \to Der(L[V_n])$$ given
by
\begin{equation*}
\bigoplus_{s \geq 1} gr_s^{DCS}(\PS_n^+) \longrightarrow
\bigoplus_{s \geq 1} gr_s^J(IA_n) \longrightarrow Der(L[V_n])
\end{equation*} which is induced by the Johnson homomorphisms is injective, and is split
injective as abelian groups (but not split as Lie algebras).
\end{cor}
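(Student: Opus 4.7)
The plan is to combine the additive decomposition of $gr_s^{DCS}(\PS_n^+)$ from Proposition \ref{exact-sequences} with the explicit computation of the Johnson homomorphism in Proposition \ref{thm:johnson.on.upper.triangular.mccool} to split $J$ into manageable summands indexed by the generator $x_q$, and then to verify both injectivity and splitting at the level of each summand using the embedding $L[V_n] \hookrightarrow T[V_n]$ in the tensor algebra.

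By Proposition \ref{exact-sequences}, $gr_s^{DCS}(\PS_n^+) \cong \bigoplus_{q=2}^n L_s[V_{q-1}]$, with the $q$-th summand generated by the classes of $\alpha_{q,1}, \ldots, \alpha_{q,q-1}$. Decompose the codomain $Hom(V_n, L_{s+1}[V_n]) = \bigoplus_{q'=1}^n L_{s+1}[V_n]$ by recording the value of a derivation on each $x_{q'}$. By Proposition \ref{thm:johnson.on.upper.triangular.mccool}, an iterated commutator $\Lambda = [w_{r_1}, \ldots, w_{r_s}]$ with $w_r = \alpha_{q,r}$ and $r_i < q$ maps under $\tau_s$ to the derivation vanishing on $x_j$ for $j \neq q$ and sending $x_q \mapsto [\Lambda_x, x_q]$, where $\Lambda_x = [x_{r_1}, \ldots, x_{r_s}] \in L_s[V_{q-1}]$ (here the group commutator $[\Lambda_x^{-1}, x_q^{-1}]$ reduces in $L_{s+1}[V_n]$ to the Lie bracket $[\Lambda_x, x_q]$ by $\BZ$-bilinearity). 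Hence $J$ is the direct sum, over $q = 2, \ldots, n$, of the maps
$$J_q\colon L_s[V_{q-1}] \longrightarrow L_{s+1}[V_n], \qquad \ell \mapsto [\ell, x_q],$$
with images landing in distinct coordinate summands of $Hom(V_n, L_{s+1}[V_n])$. Both injectivity and split injectivity of $J$ therefore reduce to the corresponding statements for each $J_q$.

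To prove $J_q$ injective, write $[\ell, x_q] = \ell \otimes x_q - x_q \otimes \ell$ in $T_{s+1}[V_n]$. Since $\ell \in L_s[V_{q-1}] \subset T_s[V_n]$ is a $\BZ$-combination of tensor monomials in $x_1, \ldots, x_{q-1}$, the monomials in $\ell \otimes x_q$ end in $x_q$ with no other occurrence of $x_q$, while those in $x_q \otimes \ell$ begin with $x_q$ with no other occurrence of $x_q$; these disjoint monomial classes force $[\ell, x_q] = 0$ to imply $\ell = 0$. For the splitting, suppose $ny = [\ell, x_q]$ with $n > 0$, $y \in L_{s+1}[V_n]$, $\ell \in L_s[V_{q-1}]$; reading off the coefficient of each basis monomial $x_{i_1} \otimes \cdots \otimes x_{i_s} \otimes x_q$ (with $i_j < q$) in $T_{s+1}[V_n]$ shows that $n$ divides every coefficient of $\ell$ in $T_s[V_{q-1}]$, so $\ell = n\ell'$ for some $\ell' \in T_s[V_{q-1}]$. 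By PBW for the free Lie algebra over $\BZ$, $L_s[V_{q-1}]$ is a $\BZ$-module direct summand of $T_s[V_{q-1}]$; by purity, $\ell' \in L_s[V_{q-1}]$. Then $n(y - [\ell', x_q]) = 0$ in the torsion-free group $L_{s+1}[V_n]$ yields $y = J_q(\ell')$, so the cokernel of $J_q$ is torsion-free and $J_q$ is split. The main subtlety is this purity step, which is where one must invoke PBW over $\BZ$; everything else is bookkeeping with tensor monomials, and summing over $q$ and $s$ then delivers the claim.
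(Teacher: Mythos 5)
Your proposal is correct and follows exactly the route the paper intends: the paper states this corollary as an immediate consequence of Proposition \ref{thm:johnson.on.upper.triangular.mccool} and the decomposition of Proposition \ref{exact-sequences}, with the $q$-th summand $L_s[V_{q-1}]$ mapping into the $x_q$-coordinate of $Hom(V_n,L_{s+1}[V_n])$ by $\ell \mapsto [\ell,x_q]$. Your tensor-algebra argument for injectivity and for purity of the image (via PBW over $\BZ$) supplies precisely the details the paper leaves implicit.
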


\section{The last step}\label{sec:The last step}

By Corollary \ref{inclusion}, the composite morphism of Lie algebras
$$J: gr_*^{DCS}(\PS_n^+) \to Der(L[V_n])$$ given
by
\begin{equation*}
\bigoplus_{s \geq 1} gr_s^{DCS}(\PS_n^+) \longrightarrow
\bigoplus_{s \geq 1} gr_s^J(IA_n) \longrightarrow Der(L[V_n])
\end{equation*} is injective, and is additively split. By \ref{lem:split monic}, the morphism of Lie algebras

\[
\begin{CD}
gr_*^{DCS}(H(n,k)) @>{gr_*(\Theta(n,k))}>>
gr_*^{DCS}(P\Sigma_{n}^+).
\end{CD}
\] is a monomorphism which is additively split
in case $ 2 \leq k \leq n$. The next theorem follows at once.

\begin{thm}\label{thm:more.morphisms.of.Lie.algebras}
If $n \geq 3$, and $ 2 \leq k \leq n$, the composite homomorphism
$$ H(n,k) \to  P\Sigma_{n}^+ \to IA_n$$ induces a morphism of Lie algebras
$$gr_*^{DCS}(H(n,k)) \to gr_*^{DCS}(P\Sigma_{n}^+)\to  gr_*^J(IA_n)\to Der(L[V_n]).$$
This composite is a monomorphism of Lie algebras and  is a split
monomorphism of abelian groups.
\end{thm}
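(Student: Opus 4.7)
The plan is to deduce this theorem by stacking together the two preceding results without doing any fresh calculation. First I would unpack the statement: the asserted composite of Lie algebras
$$gr_*^{DCS}(H(n,k)) \xrightarrow{gr_*(\Theta(n,k))} gr_*^{DCS}(\PS_n^+) \longrightarrow gr_*^J(IA_n) \xrightarrow{J} Der(L[V_n])$$
is already available piece by piece. The first arrow is treated in Lemma \ref{lem:split monic}, where it is shown to be a Lie algebra monomorphism that is a split monomorphism on underlying abelian groups. The remaining two arrows, viewed as one composite, are exactly the map in Corollary \ref{inclusion}, which asserts injectivity of $J$ restricted to $gr_*^{DCS}(\PS_n^+)$, together with additive splitting.

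Next I would argue that these two properties behave well under composition. For the Lie algebra structure, a composite of Lie algebra monomorphisms is a Lie algebra monomorphism, and each of the two maps respects brackets by construction, so the overall arrow is a morphism of Lie algebras that is injective. For the additive splitting, if $r \colon gr_*^{DCS}(\PS_n^+) \to gr_*^{DCS}(H(n,k))$ is a $\BZ$-linear retraction of the first arrow (from Lemma \ref{lem:split monic}) and if $\rho \colon Der(L[V_n]) \to gr_*^{DCS}(\PS_n^+)$ is a $\BZ$-linear retraction of the composite $gr_*^{DCS}(\PS_n^+)\to gr_*^J(IA_n)\to Der(L[V_n])$ (from Corollary \ref{inclusion}), then the composite $r\circ \rho$ is a $\BZ$-linear retraction of the full composite. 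Hence the latter is split as a map of abelian groups.

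The only mild point to check, and the closest thing to an obstacle, is that the composite is indeed well-defined as drawn, i.e.\ that $H(n,k) \to \PS_n^+ \to IA_n$ sends $\Gamma^s(H(n,k))$ into $J_n^s$, so that the induced map descends to associated graded Lie algebras in the indicated way. This is immediate once one observes that the Johnson filtration $\{J_n^s\}$ is a central filtration of $IA_n$, so $\Gamma^s(IA_n)\subseteq J_n^s$; combined with the already-established map $gr_*^{DCS}(\PS_n^+)\to gr_*^J(IA_n)$ inside Corollary \ref{inclusion}, the factorization through $gr_*^J(IA_n)$ is automatic. With this verified, the theorem follows directly by assembling Lemma \ref{lem:split monic} and Corollary \ref{inclusion}, with no additional computation needed.
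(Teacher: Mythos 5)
Your proposal is correct and takes essentially the same approach as the paper: the paper likewise derives the theorem ``at once'' by composing Lemma \ref{lem:split monic} with Corollary \ref{inclusion}, exactly as you do. Your extra remarks (composing the two retractions, and the containment $\Gamma^s(IA_n)\subseteq J_n^s$ ensuring the factorization through $gr_*^J(IA_n)$) merely make explicit what the paper leaves implicit.
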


Since the composite map of Theorem
\ref{thm:more.morphisms.of.Lie.algebras}
$$\gamma:gr_s^{DCS}(H(n,k)) \to Hom(V_n, L_{s+1}[V_n])$$ is a split monomorphism of finitely generated, free abelian groups,
the map $\gamma$ induces a split epimorphism in integer cohomology
$$\gamma^*:H^*(Hom(V_n, L_{s+1}[V_n])) \to H^*(gr_s^{DCS}(H(n,k))).$$

Observe that the cohomology ring of $gr_s^{DCS}(H(n,k))$ is
isomorphic to that of the product $$\Gamma^s(G(n,k,k-1))\times
\Gamma^s(G(n,k+1,k-1))\times \cdots \times \Gamma^s(G(n,n,k-1))$$ by
Lemma \ref{lem:Projection.maps}. Thus, the cohomology of
$\Gamma^sH(n,k)$ contains
$$ \bigotimes_{n-k+1} (\mathbb Z \oplus L_s[V_{k-1}]^*)$$ by
Lemma \ref{lem:observation}. On the other-hand, the natural quotient
map
\[
\begin{CD}
\Gamma^sH(n,k) @>{gr_s(\Theta(n,k))}>> gr_s^{DCS}(H(n,k))
\end{CD}
\] induces a surjection onto its image in cohomology given in
Corollary \ref{cor:cohomology.of.Lie.algebra.for.H} by $$
\bigotimes_{n-k+1} (\mathbb Z \oplus L_s[V_{k-1}]^*).$$

The next statement as well as the main Theorem
\ref{thm:cohomology.in.johnson.filtrations} follows by setting $q =
k-1$: the case $k=2$ is deleted as $L_s[V_{1}]^* = \{0\}$ for $s>1$.
\begin{thm}\label{thm:cohomology.estimate}
If $n \geq 3$, and $ 3 \leq k \leq n$, the integral cohomology ring
$H^*(J_n^s)$ contains a direct summand which is additively
isomorphic to
$$\bigotimes_{n-k+1} (\mathbb Z \oplus L_s[V_{k-1}]^*).$$

Furthermore, this summand is in the image of the map induced by the
Johnson homomorphism on integral cohomology groups
$$(\tau_s)^*:H^*(Hom( V_n, L_{s+1}[V_n])) \to H^*(J^s_n).$$
\end{thm}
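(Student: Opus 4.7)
The plan is to manufacture the summand of $H^*(J_n^s)$ by lifting the direct summand of $H^*(\Gamma^s H(n,k))$ already produced in Corollary \ref{cor:cohomology.of.Lie.algebra.for.H} through a Johnson-homomorphism square. Write $f: \Gamma^s H(n,k) \to J_n^s$ for the restriction of the inclusion $H(n,k) \hookrightarrow \PS_n^+ \hookrightarrow IA_n$ (which lands in $J_n^s$ because $\Gamma^s IA_n \subset J_n^s$), $\pi: \Gamma^s H(n,k) \twoheadrightarrow gr_s^{DCS}(H(n,k))$ for the abelianization quotient, and $\iota$ for the restriction of $\tau_s$ to $gr_s^{DCS}(H(n,k))$. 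Then one has the commutative square
\[
\begin{CD}
\Gamma^s H(n,k) @>{f}>> J_n^s \\
@V{\pi}VV @V{\tau_s}VV \\
gr_s^{DCS}(H(n,k)) @>{\iota}>> Hom(V_n, L_{s+1}[V_n]),
\end{CD}
\]
in which the bottom edge $\iota$ is a split monomorphism of finitely generated free abelian groups by Theorem \ref{thm:more.morphisms.of.Lie.algebras}.

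Next I would extract the two splittings that drive the argument. Because $\iota$ is a split monomorphism of free abelian groups, its induced map on integer cohomology exterior algebras is a split surjection; fix any section $\sigma : H^*(gr_s^{DCS}(H(n,k))) \to H^*(Hom(V_n, L_{s+1}[V_n]))$. On the other hand, $H^*(gr_s^{DCS}(H(n,k)))$ is the exterior algebra $\Lambda^*\bigl(\bigoplus_{n-k+1} L_s[V_{k-1}]^*\bigr)$, and inside it sits the natural Künneth copy
\[
\widetilde T \;:=\; \bigotimes_{n-k+1}\bigl(\mathbb Z \oplus L_s[V_{k-1}]^*\bigr),
\]
the tensor product of the degree-$\leq 1$ pieces in each factor. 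The proof of Lemma \ref{lem:observation} shows that for each factor $G(n,m,k-1)\cong F_{k-1}$ the injection $\mathbb Z \oplus L_s[V_{k-1}]^* \hookrightarrow H^*(\Gamma^s F_{k-1})$ is precisely $\pi^*$ restricted to the degree-$\leq 1$ piece, and admits a split retraction; by Künneth, $\pi^*|_{\widetilde T}$ is then a split injection onto the summand $T := \bigotimes_{n-k+1}(\mathbb Z \oplus L_s[V_{k-1}]^*)$ of $H^*(\Gamma^s H(n,k))$, with retraction $p$.

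The candidate summand of $H^*(J_n^s)$ is then the image of
\[
A \;:=\; (\tau_s)^* \circ \sigma|_{\widetilde T} : \widetilde T \longrightarrow H^*(J_n^s),
\]
which automatically lies in the image of $(\tau_s)^*$. Chasing the cohomology square,
\[
p \circ f^* \circ A \;=\; p \circ \pi^* \circ \iota^* \circ \sigma|_{\widetilde T} \;=\; p \circ \pi^*|_{\widetilde T} \;=\; \mathrm{id}_{\widetilde T},
\]
so $A$ is a split injection whose image is a direct summand of $H^*(J_n^s)$ additively isomorphic to $T$. Theorem \ref{thm:cohomology.in.johnson.filtrations} follows by substituting $q = k-1$ and letting $k$ range over $3 \leq k \leq n$ (the case $k=2$ is omitted since $L_s[V_1]^* = 0$ for $s > 1$).

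The step I expect to require the most care is identifying $\widetilde T$ correctly so that $\pi^*|_{\widetilde T}$ really agrees with the split inclusion of Lemma \ref{lem:observation}; one must open up the proof of that lemma to see that its splitting is the dual of the abelianization-factored map $\Gamma^s F_q \twoheadrightarrow L_s[V_q]$ on degree-one classes, Künneth-extended across the $n-k+1$ factors. Once this compatibility is pinned down, the remainder of the argument is a formal diagram chase on the square above.
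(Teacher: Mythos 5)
Your argument is correct and follows essentially the same route as the paper: the same commutative square relating $\Gamma^s H(n,k) \to J_n^s$ and $gr_s^{DCS}(H(n,k)) \to Hom(V_n,L_{s+1}[V_n])$, the split epimorphism $\iota^*$ coming from Theorem \ref{thm:more.morphisms.of.Lie.algebras}, and the identification via Lemma \ref{lem:observation} and K\"unneth of $\bigotimes_{n-k+1}(\mathbb Z\oplus L_s[V_{k-1}]^*)$ as a retractible summand in $H^*(\Gamma^s H(n,k))$. You merely make explicit the final retraction chase $p\circ f^*\circ A=\mathrm{id}$ that the paper leaves implicit.
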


Since the composite

\[
\begin{CD}
J^{s+1}_n @>{}>>J^s_n @>{\tau_s}>>  Hom( V_n, L_{s+1}[V_n])
\end{CD}
\] is constant by definition of the Johnson homomorphism (as given in Section
\ref{sec:A review of some central series}), the next result, the
`fragility' of these cohomology classes, follows at once.
\begin{cor}\label{cor:unstable.classes}
If $n \geq 3$, and $ 2 \leq k \leq n$, the composite
\[
\begin{CD}
J^{s+1}_n @>{}>>J^s_n @>{\tau_s}>>  Hom( V_n, L_{s+1}[V_n])
\end{CD}
\] gives the trivial map in cohomology when restricted to
$$\bigotimes_{n-k+1} L_s[V_{k-1}]^*,$$ and these classes are
in the kernel of the map  $$H^*(J_n^s) \to H^*(J_n^{s+1}).$$
\end{cor}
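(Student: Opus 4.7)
The plan is to deduce the vanishing statement directly from the definition of the Johnson filtration and the construction of the classes given in Theorem~\ref{thm:cohomology.estimate}. Recall that by definition $J_n^{s+1}$ is exactly the kernel of $\tau_s \colon J_n^s \to \Hom(V_n, L_{s+1}[V_n])$. Consequently the composite group homomorphism
\[
J_n^{s+1} \hookrightarrow J_n^s \xrightarrow{\tau_s} \Hom(V_n, L_{s+1}[V_n])
\]
is the constant zero homomorphism and so factors through the trivial group.

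A constant group homomorphism induces the zero map on cohomology in all positive degrees (since it factors through $H^*(\{e\})$, which vanishes above degree zero). Now the submodule $\bigotimes_{n-k+1} L_s[V_{k-1}]^*$ is concentrated in positive cohomological degree: each tensor factor $L_s[V_{k-1}]^*$ contributes degree $1$, so every class in this submodule sits in degree $n-k+1 \geq 1$. Therefore the image of any such class under the pullback
\[
(\tau_s \circ \iota)^*\colon H^*(\Hom(V_n, L_{s+1}[V_n])) \to H^*(J_n^{s+1})
\]
must vanish, where $\iota \colon J_n^{s+1} \hookrightarrow J_n^s$ denotes the inclusion.

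To conclude, I would invoke Theorem~\ref{thm:cohomology.estimate}, which asserts that the summand $\bigotimes_{n-k+1}(\mathbb Z \oplus L_s[V_{k-1}]^*)$ of $H^*(J_n^s)$ actually lies in the image of $\tau_s^*$; in particular the sub-bimodule $\bigotimes_{n-k+1} L_s[V_{k-1}]^*$ is realized as $\tau_s^*(\xi)$ for specific classes $\xi$ in $H^*(\Hom(V_n, L_{s+1}[V_n]))$. Restricting along $\iota$ then gives $\iota^*\tau_s^*(\xi) = (\tau_s\circ\iota)^*(\xi) = 0$ by the previous paragraph. This proves both formulations in the corollary: the composite $\tau_s\circ\iota$ is trivial in cohomology on the prescribed submodule, equivalently these classes lie in the kernel of $\iota^* \colon H^*(J_n^s) \to H^*(J_n^{s+1})$.

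There is essentially no technical obstacle once Theorem~\ref{thm:cohomology.estimate} is in hand; the whole content of the corollary is the tautological observation that classes pulled back from $\Hom(V_n, L_{s+1}[V_n])$ via $\tau_s$ are killed by restriction to $\ker(\tau_s) = J_n^{s+1}$. The only point to be a little careful about is noting that the relevant subgroup of cohomology classes is genuinely in positive degree (the $\mathbb Z$-summands in the tensor factors are precisely what one must exclude, which is why the statement is phrased in terms of $\bigotimes_{n-k+1} L_s[V_{k-1}]^*$ rather than $\bigotimes_{n-k+1}(\mathbb Z \oplus L_s[V_{k-1}]^*)$).
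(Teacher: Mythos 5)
Your proposal is correct and is essentially the paper's own argument: the paper likewise observes that $J_n^{s+1}=\ker\tau_s$ makes the composite constant, hence zero on cohomology in positive degrees, and then cites Theorem~\ref{thm:cohomology.estimate} to place the classes $\bigotimes_{n-k+1} L_s[V_{k-1}]^*$ in the image of $\tau_s^*$. Your added remark about excluding the degree-zero $\mathbb Z$-summands is a sensible clarification but does not change the route.
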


To estimate the ranks of the free abelian groups $L_s[V_{q}]^*$ for
fixed $1 \leq q \leq n-1$ where $V_q = \oplus_q \mathbb Z =
H_1(F_q),$ classical work of Witt is recalled next
\cite{Serre,Witt}. For fixed filtration degree $s$, write $d_s(V)$
for the rank of the free abelian group  $L_s[V]$ occurring above and
consider the power series $$\sum_{s \geq 0}  d_s(V)t^s$$ where by
convention $$d_0(V) = 1.$$

\noindent Thus $$d_1(V) = q.$$

Following Witt's application of the Poincar\'e-Birkhoff-Witt
theorem, $$1/(1-qt) = \prod_{s \geq 1} 1/{(1-t^s)^{d_s(V)}}.$$ To
find an inductive formula for the coefficients $d_s(V)$, take formal
logarithms of both sides of this equation to obtain the formula
$$q^s = \sum_{m|s} m d_m(V).$$ An elegant exposition for this information
is in Serre's book \cite{Serre}. Observe that $$sd_s(V)= q^s -
\sum_{m|s, \ m < s} m d_m(V).$$

Next, specialize to the case of filtration degree $s$ for the
summand $L_s[V]$ with $$s = p^r, \ p \ \hbox{is assumed to be
prime.}$$ The formula $q^s = \sum_{m|p^r} m d_m(V)$ then simplifies
to
$$q^{p^r} = \sum_{0 \leq i \leq r} p^{i} d_{p^i}(V) =
d_1(V) +pd_p(V) + \cdots + p^{r-1}d_{p^{r-1}}(V) + p^rd_{p^r}(V).$$

To illustrate this computation, some values are
listed next.
\begin{eqnarray*}
d_{p^{r}}(V) = \left \{ \begin{array}{ll}
q & r =0, \\
(q^{p^{r}} - q^{p^{r-1}})/p^r, & r > 0.
\end{array}
\right.
\end{eqnarray*} Thus in case $q > p$ for a fixed prime
$p$, the previous formula illustrates the rapid growth of the values
$d_{p^{r}}(V)$.

\section{Further comparison with earlier work} \label{sec:further.comparison with earlier work}

This section consists of a remark concerning work of M.~Bestvina,
K.~Bux, and D.~Margalit \cite{Bestvina-Bux-Margalit_IA}. They
exhibit an abelian subgroup of $IA_n$ determined by the
automorphisms $$\rho(p_j,q_j): H_n \to F_n$$ defined by

$$x_1 \to x_1$$
$$x_2 \to x_2$$
$$x_j \to w^{p_j}x_jw^{q_j}$$ where $w= [x_1,x_2], \ j > 2.$

Depending on the choices of $p_j$ and $q_j$, these elements live in
various stages of the Johnson filtrations. For example, if $$p_j =
1, \ \hbox{and} \ q_j = -1,$$ then the elements $\rho(p_j,q_j)$ are
in $\Gamma^2 \PS_n^+$. The groups $H(n,k)$ of section
\ref{sec:subgroup} give non-trivial abelian subgroups in $J_n^s$ for
large $s$. It is natural to ask whether the above methods imply that
$H^i(J_n^s)$ fails to be finitely generated as long as $n > 2$, $s >
2$, and $2 \leq i \leq n-2$.

\section{Appendix} \label{sec:Appendix}

The purpose of this section is to list the natural
Euler-Poincar\'e series associated to the Lie algebra
$$Der(L[V_n]) = \bigoplus_{ 1 \leq s} Hom(V_n,L_{s+1}[V_n])$$ where
each module $Hom(V_n,L_{s+1}[V_n])$ is formally assigned gradation
$s$.

The reason for doing so is that these modules are the natural images
of the Johnson homomorphism, which is injective. Thus these maps are
split rationally, and so the computation given next may provide a
setting for enumerating the cokernel of the Johnson homomorphisms in
a `global' way.

\

Recall that the rank of $V_n$ is $n$, and the rank of $L_{s}[V_n]$
is $$d_s(V_n)$$ subject to the relations discovered by Witt as
described in Section \ref{sec:The last step}. Thus the natural
Euler-Poincar\'e series associated to the Lie algebra $Der(L[V_n])$
is
$$\chi(Der(L[V_n])) = \sum_{1 \leq s} n\cdot d_{s+1}(V_n)\cdot
t^s.$$

\

It seems likely that the analogous series for the Johnson Lie
algebra should admit an analogous description in terms of the
$d_{s+1}(V_n)$.

\

\section{Acknowledgements} \label{sec:Acknowlegements}

The referee made very useful suggestions which clarified this paper;
we thank the referee for thorough, well-done work. The authors thank
Alexandru Suciu and Stefan Papadima for their interest as well as
important corrections. Finally, the authors thank Shigeyuki Morita
for explicating deep features of these groups.

\

The first author was partially supported by DARPA grant number
2006-06918-01. The third author was supported in part by NSF grant
DMS-0856143 and NSF RTG grant DMS-0602191.

%
%

\end{document}